\documentclass[oneside]{amsart}
\usepackage{hyperref}


\usepackage{geometry} 
\usepackage{graphicx}           
\usepackage{amsmath}          
\usepackage{amsfonts}             
\usepackage{amsthm}                
\usepackage{dirtytalk}
\usepackage{amssymb}

\theoremstyle{remark}
\newtheorem{rem}{Remark}[section]
\theoremstyle{definition}
\newtheorem{defn}{Definition}[section]
\theoremstyle{plain}
\newtheorem*{thma*}{Theorem}
\newtheorem*{ack*}{Acknowledgements}
\newtheorem{thm}{Theorem}[section]
\newtheorem{lem}[thm]{Lemma}
\newtheorem{prop}[thm]{Proposition}
\newtheorem{cor}[thm]{Corollary}

\newtheorem*{prop*}{Proposition}
\newtheorem*{cla*}{Claim}

\begin{document}

\title{	Generic non-uniqueness of complete $H$-surfaces embedded in $\mathbb{H}^3$ \vspace{0em}}
\author{Cagri Haciyusufoglu}
\email{chaciyusufoglu@ku.edu.tr}
\address{Department of Mathematics, Koc University, 
Istanbul, Turkey}
\date{}

\maketitle  \vspace{-3em}
\begin{abstract}
We prove that, given $|H|<1$, a generic simple closed curve embedded in the asymptotic boundary of $\mathbb{H}^3$ (with respect to the supremum metric) bounds more than one complete  surface embedded in $\mathbb{H}^3$ which has constant mean curvature $H$. We remark that this is not true for the space of simple closed $\text{C}^1$-curves. \end{abstract}

\section{Introduction}

The classical tool for obtaining complete minimal surfaces embedded in  $\mathbb{H}^3$ with a given asymptotic boundary $\Gamma \subset \partial_\infty \mathbb{H}^3$ is to solve the Asymptotic Plateau Problem.  For a given $\Gamma$, this problem asks the existence of complete surfaces asymptotic to $\Gamma$ such that any compact part of some topological type minimizes area in a certain class of competitive surfaces  (For a survey on the topic see \cite{Co13-2}). In \cite{An82}, Anderson proved the existence of \emph{absolutely area minimizing surfaces} (Definition \ref{area}) in $\mathbb{H}^3$ . In fact, he proved the result for general dimensions and codimensions. In \cite{An83}, he also proved the existence for \emph{least area planes} in $\mathbb{H}^3$(Definition \ref{area}). These results imply that for a given simple closed curve $\Gamma \subset \mathbb{H}^3$ we  always have two complete embedded minimal surfaces with asymptotic boundary $\Gamma$ which might be identical. \smallskip

\noindent Concerning the uniqueness of solutions, it is known that a simple closed curve which is the boundary of a star-shaped domain in $\partial_\infty \mathbb{H}^3$ is the asymptotic boundary of a unique complete minimal surface in  $\mathbb{H}^3$(\cite{An82}, \cite{HL87}). On the other hand, there exist simple closed curves for which any absolutely area minimizing surface must have positive genus (\cite{An83}, \cite{Ha92}). In this paper we show that this is a generic property. Let $A_0$ denote the space of all simple closed curves with respect to the $\text{C}^0$-topology (supremum metric) and $A_1$ be the subset of $A_0$ consisting of curves for which any absolutely area minimizing surface has genus greater than one. Then the following holds:  \smallskip

\begin{prop*}
 $A_1$ is an open and dense subset of $A_0$.
\end{prop*}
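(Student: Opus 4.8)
The argument splits into openness and density; density is the substantial half.

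\emph{Openness.} I would show that the complement $A_0\setminus A_1$ — the curves bounding \emph{some} absolutely area minimizing surface of genus at most one — is closed. Suppose $\Gamma_n\to\Gamma$ in the supremum metric and each $\Gamma_n$ bounds an absolutely area minimizing surface $\Sigma_n$ with $\mathrm{genus}(\Sigma_n)\le1$. Absolutely area minimizing surfaces are stable minimal surfaces with boundary at infinity, so by the interior curvature estimates for stable minimal surfaces in $\mathbb{H}^3$ together with Anderson's compactness theory a subsequence of $\{\Sigma_n\}$ converges smoothly on compact subsets of $\mathbb{H}^3$ to an absolutely area minimizing surface $\Sigma$. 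Two points need care: (i) the limit is nonempty and carries the full asymptotic boundary, $\partial_\infty\Sigma=\Gamma$, for which one uses that the convex hulls of the $\Gamma_n$ converge to that of $\Gamma$ and the uniform geodesic-plane/catenoid barriers near $\partial_\infty\mathbb{H}^3$, which keep an area minimizer close to its asymptotic boundary and so prevent any loss of boundary in the limit; and (ii) genus is lower semicontinuous under this convergence — a compact subsurface of $\Sigma$ of genus $g'$ embeds, up to isotopy, in $\Sigma_n$ for large $n$, so $g'\le\liminf\mathrm{genus}(\Sigma_n)\le1$ and hence $\mathrm{genus}(\Sigma)\le1$. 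Thus $\Gamma\in A_0\setminus A_1$, so $A_1$ is open.

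\emph{Density, setup.} Since $\overline{A_1}$ is closed and smooth Jordan curves are dense in $A_0$ in the supremum metric (e.g. by approximating uniformly, by a diffeomorphism, the Schoenflies homeomorphism carrying a round circle onto $\Gamma$), it suffices to show that every smooth Jordan curve $\gamma$ lies in $\overline{A_1}$. Fix once and for all a curve $\gamma_0\subset S^2$ provided by \cite{An83,Ha92} — iterating the Almgren--Thurston-type construction, if necessary — all of whose absolutely area minimizing surfaces have genus greater than one; after a Möbius normalization I may assume $\gamma_0\subset\mathbb{R}^2$, that $\gamma_0$ meets the $x$-axis exactly in the segment $s_0=[-1,1]\times\{0\}$, and that the ``complicated part'' $\alpha:=\gamma_0\setminus\mathrm{int}(s_0)$ (an embedded arc from $(-1,0)$ to $(1,0)$) lies in a fixed bounded region. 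Given a smooth Jordan curve $\gamma$ and $\varepsilon>0$, choose $p\in\gamma$ and a chart in which $p=0$ and $T_p\gamma$ is the $x$-axis, so $\gamma$ near $0$ is the graph $x\mapsto(x,h(x))$ with $h(0)=h'(0)=0$. For small $\lambda>0$ define $\Gamma'_\lambda$ by deleting from $\gamma$ the part with $|x|<C\lambda$ (for a fixed constant $C$ depending only on $\alpha$) and splicing in the rescaled arc $\lambda\alpha$, joined to $\gamma$ by two short $x$-axis-like segments; a routine check makes $\Gamma'_\lambda$ an embedded Jordan curve supported, away from $\gamma$, in a ball of radius $O(\lambda)$, so $\Gamma'_\lambda$ is $\varepsilon$-close to $\gamma$ once $\lambda$ is small. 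It remains to prove $\Gamma'_\lambda\in A_1$ for all small $\lambda$.

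\emph{Density, the blow-up.} Suppose not: there are $\lambda_j\downarrow0$ and absolutely area minimizing surfaces $\Sigma_j$ with $\partial_\infty\Sigma_j=\Gamma'_{\lambda_j}$ and $\mathrm{genus}(\Sigma_j)\le1$. Applying the hyperbolic isometry of the upper half-space model given by Euclidean dilation by $\lambda_j^{-1}$ about $0$, the surfaces $\widehat\Sigma_j:=\lambda_j^{-1}\Sigma_j$ are absolutely area minimizing of genus at most one with $\partial_\infty\widehat\Sigma_j=\lambda_j^{-1}\Gamma'_{\lambda_j}$. Because $h(0)=h'(0)=0$, the rescaled curves $\lambda_j^{-1}\Gamma'_{\lambda_j}$ converge in $S^2=\mathbb{R}^2\cup\{\infty\}$, in the Hausdorff metric, to the Jordan curve $\widehat\gamma_0:=\alpha\cup\bigl(\{(x,0):|x|\ge1\}\cup\{\infty\}\bigr)$ — that is, $\gamma_0$ with the segment $s_0$ replaced by the complementary arc of the same round circle. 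Rerunning the compactness argument from the openness step (the near-$\infty$ part of $\widehat\gamma_0$ being a flat geodesic picture where the barriers apply just as well) yields an absolutely area minimizing surface $\widehat\Sigma$ with $\partial_\infty\widehat\Sigma=\widehat\gamma_0$ and $\mathrm{genus}(\widehat\Sigma)\le1$. This contradicts the genus-forcing property \emph{provided} one knows that closing the arc $\alpha$ by \emph{any} embedded arc meeting it only at its endpoints — in particular the closure producing $\widehat\gamma_0$ — still forces genus greater than one; for Almgren--Thurston-type curves this is built in, since the genus is forced by the way $\alpha$ threads a fixed collection of ``rings'' in $\mathbb{H}^3$ and any such closure threads the same rings.

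\emph{Main obstacle.} The crux is precisely this last point: extracting from (or re-deriving, along the lines of) \cite{An83,Ha92} a genus-forcing configuration that is a property of the complicated arc $\alpha$ alone rather than of the particular Jordan curve containing it, so that it survives the blow-up where the closing arc of $\alpha$ changes from $s_0$ to its Möbius-complement in $\widehat\gamma_0$. A close second is the asymptotic-boundary control in both limits: one must exclude any loss of the boundary curve at infinity when passing to a limit of area minimizers whose asymptotic boundaries degenerate (in the blow-up, the ``far'' part of $\gamma$ collapses to the point $\infty$), which I would handle with the standard barriers near $\partial_\infty\mathbb{H}^3$, uniformly in the sequence. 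The remaining ingredients — existence and compactness of absolutely area minimizing surfaces, curvature estimates, lower semicontinuity of genus, density of smooth curves, and the verification that $\Gamma'_\lambda$ is embedded and $C^0$-close — are routine.
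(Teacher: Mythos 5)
Your openness argument is essentially the paper's: compactness of area minimizers via local area bounds and curvature estimates for stable minimal surfaces, multiplicity-one convergence, and lower semicontinuity of genus; that half is fine. (The paper itself wavers between ``positive genus'' and ``genus greater than one'' in defining $A_1$; either reading runs the same way.)

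The density half is where the problem lies. Your route --- splice a rescaled copy of the ``complicated arc'' $\alpha$ of an Anderson/Hass genus-forcing curve into $\gamma$, then blow up and derive a contradiction --- hinges entirely on the claim that the genus-forcing property is carried by the arc $\alpha$ alone, i.e.\ that closing $\alpha$ by the complementary arc through $\infty$ (rather than by $s_0$) still forces every absolutely area minimizing surface to have high genus. You flag this yourself as the ``main obstacle,'' and it is a genuine gap, not a routine verification: in the Anderson and Hass constructions the genus is forced by a \emph{global} area comparison (a low-genus spanning surface must cross certain necks and therefore costs more area than an explicit high-genus competitor), and both sides of that comparison change when the closing arc of $\alpha$ changes and when the rest of $\gamma$ degenerates to the point $\infty$ in the blow-up. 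Nothing in the cited constructions gives the ``property of the arc alone'' you need, so as written the density proof does not close. The paper avoids this entirely with a softer mechanism: by Coskunuzer's generic uniqueness result there is a smooth curve $\Gamma_u\in N_{\epsilon/2}(\Gamma)$ bounding a \emph{unique} absolutely area minimizing surface $\Sigma$; if $\Sigma$ is a plane, one application of the Martin--White bridge principle at infinity replaces $\Gamma_u$ by a nearby two-component curve whose unique minimizer is an annulus, and a second bridge joining the two components produces a nearby simple closed curve whose unique minimizer has genus one, hence lies in $A_1$. That argument needs no blow-up, no control of asymptotic boundaries under degenerating limits, and no localization of the genus-forcing examples; I would recommend replacing your density step by it, or else supplying a full proof of the arc-localized genus-forcing claim, which would be a substantial piece of work in its own right.
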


\noindent From this and the existence of least area planes, it follows that generically a simple closed curve bounds more than one complete embedded minimal surface. An analogous result exists for the space of simple closed curves embedded in the boundary of mean convex 3-manifolds (\cite{TC15}) and here we follow those ideas.  \smallskip

\noindent To show that $A_1$ is open, we take a sequence of curves $\Gamma_n\in A_0\setminus A_1$ which converges to a simple closed curve $\Gamma$. Then for each $n$, $\Gamma_n$ is the asymptotic boundary of an absolutely area minimizing surface $\Sigma_n$ which has genus zero by definition. By classical arguments we can extract a subsequence of $\Sigma_n$ converging smoothly on compact sets to an absolutely area minimizing surface $\Sigma$ which has asymptotic boundary $\Gamma$ and has genus zero. This shows that $A_0\setminus A_1$ is closed so that  $A_1$ is open. This argument directly extends to the statement $A_g$ is open for every $g\in\mathbb{N}$, where $A_g$ is defined analogously (Remark \ref{genus}). This means that by a deformation of $\Gamma\in A_g$ which is sufficiently small compared to the diameter of $\Gamma$ we cannot obtain an absolutely area minimizing surface which has genus strictly less than $g$.  \smallskip

\noindent However, we can do the converse by applying a bridge principle \emph{at infinity}. In other words, we can \say{increase the genus} by a small deformation of the boundary curve $\Gamma$.  In \cite{MW13}, Martin and White developed a bridge principle for \emph{uniquely minimizing} surfaces (Definition \ref{area}) with smooth asymptotic boundary by attaching bridges to the asymptotic boundary curve and using it succesively they proved that any open surface can be properly embedded in $\mathbb{H}^3$. Using similar ideas we can show that $A_1$ is also dense in $A_0$.\smallskip

 \noindent Note that for our result we do not demand smoothness on the boundary curves. However, by the generic uniqueness of absolutely area minimizing surfaces (\cite{Co11}), in a $\text{C}^0$ neighborhood of a given simple closed curve, we can always find a smooth simple closed curve which bounds a unique absolutely area minimizing surface to which we can apply the bridge principle.  \smallskip

\noindent Analogous results exist for more general constant mean curvature surfaces. We will call a constant mean curvature surface with mean curvature $H\in \mathbb{R}$ as a $H$-\emph{surface}.  For $|H|<1$, complete embedded $H$-surfaces can be obtained by formulating the asymptotic Plateau problem for surfaces which minimizes area on compact domains with a volume constraint (\cite{To96}, \cite{AR97})(See Definition \ref{harea}). Following \cite{Co05} we will call such surfaces as \emph{H-minimizing} surfaces. Bridge principle for $H$-minimizing surfaces has been obtained in \cite{Co13-1}. Existence of \emph{H-minimizing planes} for $|H|<1$ is obtained in \cite{Co15} which also removes a gap in the Anderson's proof for the existence of least area planes. For this it is necessary to have one smooth point on the curve. In the most general form our main theorem can be stated as follows: 

\begin{thma*}
Given $|H|<1$, the set of simple closed curves $\Gamma\subset \partial_\infty \mathbb{H}^3$ with one smooth point and which bounds more than one complete $H$-surface embedded in $\mathbb{H}^3$ contains an open and dense subset of the space of all simple closed curves with respect to $\textnormal{C}^0$-topology.
\end{thma*}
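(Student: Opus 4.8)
The plan is to follow the minimal-surface template sketched in the introduction, replacing ``absolutely area minimizing'' with ``$H$-minimizing'' throughout and replacing the two existence ingredients by their $H$-analogues. First I would fix $|H|<1$ and let $B_0$ denote the space of simple closed curves in $\partial_\infty\mathbb{H}^3$ with the $\mathrm{C}^0$-topology, and let $B_1\subset B_0$ be the set of curves for which every $H$-minimizing surface has genus $\ge 1$. The theorem follows once we show $B_1$ contains an open dense subset of $B_0$: indeed, by \cite{Co15} every $\Gamma\in B_0$ with one smooth point also bounds an $H$-minimizing plane (genus $0$), so a curve in $B_1$ with a smooth point bounds at least two distinct complete embedded $H$-surfaces.

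For \emph{openness}, I would argue as in the minimal case: if $\Gamma_n\to\Gamma$ with each $\Gamma_n$ bounding a genus-zero $H$-minimizing surface $\Sigma_n$, then uniform curvature estimates for $H$-minimizing surfaces (together with the volume-constrained minimization setup of \cite{To96,AR97}) let me extract a subsequence converging smoothly on compact subsets of $\mathbb{H}^3$ to a complete embedded surface $\Sigma$ with constant mean curvature $H$, asymptotic boundary $\Gamma$, and genus zero; a standard cut-and-paste comparison argument shows $\Sigma$ is again $H$-minimizing. Hence $B_0\setminus B_1$ is closed. (As in Remark \ref{genus} the same argument gives openness of $B_g$ for all $g$.) The one technical point to be careful about here is that the asymptotic boundary is preserved in the limit; this uses the barrier/maximum-principle control near $\partial_\infty\mathbb{H}^3$ that is already part of the $H$-minimizing theory.

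For \emph{density}, fix an arbitrary $\Gamma\in B_0$ and $\varepsilon>0$. By the generic uniqueness of $H$-minimizing surfaces — the $H$-analogue of \cite{Co11}, or more directly by perturbing slightly — I would first replace $\Gamma$ by a $\mathrm{C}^0$-close \emph{smooth} simple closed curve $\Gamma'$ that bounds a \emph{unique}, hence uniquely $H$-minimizing, surface $\Sigma'$. If $\Sigma'$ already has positive genus we are done; otherwise $\Sigma'$ is a plane, and I would apply the bridge principle for $H$-minimizing surfaces from \cite{Co13-1} to attach a thin handle at infinity: choosing a short bridge joining two points of $\Gamma'$ whose endpoints lie on ``opposite sides'' of the disk $\Sigma'$ produces a new smooth curve $\Gamma''$, arbitrarily $\mathrm{C}^0$-close to $\Gamma'$, whose $H$-minimizing surface is obtained from $\Sigma'$ by adding a handle and therefore has genus $\ge 1$. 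By openness, a whole $\mathrm{C}^0$-neighborhood of $\Gamma''$ lies in $B_1$, so $B_1$ is dense. The curves produced are smooth, in particular have a smooth point, so they lie in the set described in the theorem.

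The main obstacle I expect is the bridge principle step: one must verify that the hypotheses of \cite{Co13-1} are met — in particular that $\Sigma'$ is \emph{uniquely} $H$-minimizing with smooth boundary (handled by the genericity reduction) and that the bridge can be placed so that the resulting surface genuinely gains a handle rather than, say, splitting off a trivial disk or failing to stay embedded. Controlling the topology of the surface after attaching a bridge at infinity, and ensuring the new boundary curve remains a \emph{simple} closed curve that is $\mathrm{C}^0$-close to the original, is the delicate geometric input; everything else is a routine transcription of the minimal-surface argument using the compactness and maximum-principle machinery available for $H$-minimizing surfaces when $|H|<1$.
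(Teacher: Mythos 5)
Your overall strategy is the same as the paper's: reduce the theorem to showing that the set $A^H_1$ of curves whose every $H$-minimizing surface has positive genus is open and dense, get the second surface from the existence of $H$-minimizing planes for curves with a smooth point (Lemma \ref{disk}), prove openness by the compactness/curvature-bound argument, and prove density via generic uniqueness plus a bridge principle at infinity. The openness half and the reduction are fine.

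However, the density step contains a concrete topological error. You claim that a \emph{single} bridge joining two points of $\Gamma'$ ``whose endpoints lie on opposite sides of the disk'' produces a surface obtained from the plane $\Sigma'$ by adding a handle. This cannot happen: the bridge $P_\alpha$ is a band lying in the sphere at infinity, attached along two arcs of the single boundary circle of $\Sigma'$ in an orientation-compatible (untwisted) way. Attaching such a band to a disk always yields an annulus, so the genus stays zero; moreover the new asymptotic boundary consists of \emph{two} disjoint simple closed curves, so your $\Gamma''$ is not even an element of the space of simple closed curves. There is no placement of one bridge that creates a handle. The paper's Proposition \ref{dense} (which Proposition \ref{op} invokes) gets around this by applying the bridge principle \emph{twice}: the first bridge turns the uniquely minimizing plane into a uniquely minimizing annulus whose asymptotic boundary $\Gamma_u'$ has two components (this is why Lemma \ref{bridgee} is stated for finite disjoint unions of curves), and the second bridge $\alpha'$ joins the two components of $\Gamma_u'$, producing a connected simple closed curve $\Gamma_u''$ bounding a uniquely minimizing surface of genus one. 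Your argument needs this second bridge; as written, the curve you produce does not lie in $B_1$ (indeed not in $B_0$), so density is not established.
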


\noindent In the next section we give the related background for our work. In the third section we first prove our result for complete embedded minimal surfaces and then we generalize it to $H$-surfaces. In the last section we will comment on some questions that naturally arises; what happens for the space of $\text{C}^1$ curves and for the curves with more than one component?
\begin{ack*}
\textnormal{I am very grateful to my advisor Baris Coskunuzer for his guidance and Francisco Martin for helpful conversations. Part of this work has been completed during my visit in University of Granada\footnote{This visit is funded by TUBITAK(The Scientific and Technological Research Council of Turkey) 2214 grant.}. I would like to thank the Department of Geometry and Topology for their hospitality.}
\end{ack*}
\section{Preliminaries}\label{prelim}

We will denote by $\mathbb{H}^3$ the 3-dimensional hyperbolic space. $\partial_\infty \mathbb{H}^3$ will denote the asymptotic boundary of $\mathbb{H}^3$ which is homeomorphic to the 2-dimensional sphere. If $A\subset  \mathbb{H}^3$, $\overline A$ will denote the closure of $A$ in the closed ball $ \mathbb{H}^3 \cup \partial_\infty \mathbb{H}^3$. We will call the set $\overline A \setminus A$ the \emph{asymptotic boundary of} $A$ which will be denoted by $\partial_\infty A$.

\begin{defn}\label{area}
A complete noncompact surface $\Sigma$ in $\mathbb{H}^3$ is said to be an  \emph{absolutely area minimizing} surface if any compact subsurface $S$ of $\Sigma$ minimizes area among all surfaces with boundary equal to $\partial S$. If for a simple closed curve $\Gamma \subset \partial_\infty \mathbb{H}^3$ there exists a unique absolutely area minimizing surface asymptotic to $\Gamma$, then we will call $\Gamma$ a \emph{uniquely minimizing curve} and $\Sigma$ a \emph{uniquely minimizing surface}. A complete embedded plane $P$ is called a \emph{least area plane} if any compact subdisk $D$ of $P$ minimizes area among all disks with boundary $\partial D$.

\end{defn}

\begin{lem} [\cite{An82}] \label{ex}
Let $p< n$ and $\Gamma^p$ be a closed immersed submanifold of $\partial_\infty \mathbb{H}^{n+1}$. Then there exists a complete absolutely area minimizing locally integral $p+1$ current $\Sigma^{p+1}\subset \mathbb{H}^{n+1}$ asymptotic to $\Gamma^p$.
\end{lem}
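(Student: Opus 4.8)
The plan is to follow Anderson's strategy of solving a sequence of compact Plateau problems and passing to a limit, with the whole argument carried out in the geometric measure theory category. First I would pass to the ball model, writing $\mathbb{H}^{n+1}$ as the open unit ball $B^{n+1}$ equipped with the conformally Euclidean metric $g_{\mathrm{hyp}} = \tfrac{4}{(1-|x|^2)^2}\,g_{\mathrm{eucl}}$, so that $\partial_\infty \mathbb{H}^{n+1}$ is identified with the round sphere $S^n = \partial B^{n+1}$ and $\Gamma^p \subset S^n$. For each $r \in (0,1)$ let $\Gamma_r$ be the radial retraction of $\Gamma$ onto the Euclidean sphere $\{|x| = r\}$; these are compact $p$-cycles lying in the interior of $\mathbb{H}^{n+1}$ with $\Gamma_r \to \Gamma$ as $r \uparrow 1$.

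For each $r$ I would solve the compact Plateau problem for $\Gamma_r$ in $\mathbb{H}^{n+1}$: the class of integral $(p+1)$-currents $T$ with $\partial T = \Gamma_r$ is nonempty, since it contains the geodesic cone over $\Gamma_r$, and every such cone has finite hyperbolic mass; hence by the Federer--Fleming compactness theorem together with lower semicontinuity of mass there exists a mass-minimizing integral current $\Sigma_r$ with $\partial \Sigma_r = \Gamma_r$. The whole point of the conformal model is that mass-minimization is a purely metric statement and can therefore be tested against explicit geometric barriers.

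The next and most important step is to obtain estimates that are \emph{uniform in $r$}. Totally geodesic hyperplanes of $\mathbb{H}^{n+1}$ (Euclidean spheres meeting $S^n$ orthogonally) are minimal, so by the maximum principle for minimizing currents each $\Sigma_r$ is confined to the convex hull $\mathcal{CH}(\Gamma)$, the intersection of all closed half-spaces whose closure meets $S^n$ in a set containing $\Gamma$. Since $p < n$, this is a proper subregion of $\overline{\mathbb{H}^{n+1}}$, which already prevents the $\Sigma_r$ from spreading over all of $S^n$. Combining the confinement with the monotonicity formula for minimizing currents in $\mathbb{H}^{n+1}$ then yields, for every compact $K \subset \mathbb{H}^{n+1}$, a bound $\mathbf{M}_K(\Sigma_r) \le C(K)$ independent of $r$. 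With these local mass bounds in hand I would exhaust $\mathbb{H}^{n+1}$ by compact sets $K_1 \subset K_2 \subset \cdots$ and apply the compactness theorem diagonally to extract a subsequence $\Sigma_{r_k}$ converging weakly, with locally bounded mass, to a locally integral current $\Sigma$. Lower semicontinuity of mass together with the minimizing property of each $\Sigma_{r_k}$ against compactly supported competitors shows that $\Sigma$ is absolutely area minimizing in the sense of Definition~\ref{area}, and the confinement passes to the limit, keeping $\Sigma$ inside $\mathcal{CH}(\Gamma)$.

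The hard part is the asymptotic boundary attainment, namely showing $\partial_\infty \Sigma = \Gamma$ together with the completeness of $\Sigma$; one must rule out the possibility that the minimizers $\Sigma_{r_k}$ lose their boundary in the limit by retreating from $S^n$, and one must pin the ideal boundary to exactly $\Gamma$ rather than to some larger portion of $\mathcal{CH}(\Gamma)\cap S^n$. Here the degeneration of the conformal factor near $S^n$ makes the interior compactness insufficient, and the argument must instead produce barriers \emph{at infinity}: near each point $q \in \Gamma$ I would use small geodesic spheres or horospheres tangent to $S^n$ at $q$, whose minimality forces a definite amount of the mass of $\Sigma_{r_k}$ to accumulate near $q$ uniformly in $k$, so that $q \in \partial_\infty \Sigma$, while the same families of barriers on the complementary side confine $\Sigma$ away from $S^n \setminus \Gamma$. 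This localized barrier construction, which simultaneously delivers the completeness of $\Sigma$ and fixes its ideal boundary as all of $\Gamma$, is the crux of the proof.
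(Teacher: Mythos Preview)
The paper does not give its own proof of this lemma: it is quoted as a preliminary result from Anderson \cite{An82} and used without argument, so there is no in-paper proof to compare against. Your sketch is a faithful outline of Anderson's original strategy (compact Plateau problems on concentric spheres in the ball model, uniform local mass bounds, diagonal compactness, and barrier arguments at infinity to pin down $\partial_\infty\Sigma=\Gamma$), and as such it is appropriate here.

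One point worth tightening if you expand this: the step ``totally geodesic hyperplanes are minimal, so by the maximum principle each $\Sigma_r$ is confined to the convex hull $\mathcal{CH}(\Gamma)$'' is clean only in codimension one. For general $p+1<n$ the maximum principle against hypersurfaces does not directly apply to a $(p+1)$-current, and Anderson's uniform local mass bounds instead come from comparing with explicit cones and using monotonicity, rather than from a convex-hull barrier. Your subsequent appeal to monotonicity is the right ingredient; just don't route it through a hyperplane maximum principle in the higher-codimension case.
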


\begin{rem} \label{simp}
Later, in \cite{An83}, using the fact that any simple closed curve can be approximated by inscribed polygons it is proved that this result also holds for any simple closed curve in $\partial_\infty \mathbb{H}^3$.
\end{rem}

\noindent If we restrict ourselves to hypersurfaces, solutions are smooth and embedded away from a singular set of Hausdorff dimension $n-7$ by the interior regularity results of geometric measure theory. In the particular case of dimension $n=2$ the solutions are smooth embedded surfaces.  \smallskip

\noindent Note that in the construction of this solution, there is no control over the topology of the resulting absolutely area minimizing surface. In \cite{An83} Anderson solved the asymptotic Plateau problem for disk type (See also \cite{Ga97} and \cite{Co15}).

\begin{lem} \label{plane}
Let $\Gamma\subset \partial_\infty \mathbb{H}^3$ be a simple closed curve. Then there exists a complete embedded least area plane with asymptotic boundary $\Gamma$.
\end{lem}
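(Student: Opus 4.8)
The plan is Anderson's exhaustion argument, which I now outline. Fix $O\in\mathbb{H}^3$, let $r_i\to\infty$, and set $S_i=\partial B(O,r_i)$. Choose rectifiable Jordan curves $\Gamma_i\subset S_i$ with $\Gamma_i\to\Gamma$ in $\overline{\mathbb{H}^3}$ (for instance inscribed polygons in the radial projection of $\Gamma$ onto $S_i$, in the spirit of Remark~\ref{simp}). Since $S_i$ is a mean convex sphere and $\Gamma_i$ is null-homotopic in $B(O,r_i)$, Morrey's solution of the Plateau problem together with the embeddedness theorem of Meeks--Yau provides an embedded least area disk $D_i\subset B(O,r_i)$ with $\partial D_i=\Gamma_i$. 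The desired plane will be a limit of the $D_i$.

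For the compactness step I would use Schoen's uniform interior curvature estimate for stable minimal surfaces in $\mathbb{H}^3$: since $\mathrm{dist}(K,\Gamma_i)\to\infty$ for every compact $K\subset\mathbb{H}^3$, the norm $\sup_K|A_{D_i}|$ is bounded independently of $i$, and the minimizing property bounds $\mathrm{Area}(D_i\cap K)$ as well. A diagonal argument then extracts a subsequence converging smoothly on compact subsets to a complete embedded minimal surface $P$ (one may arrange the convergence to have multiplicity one, and pass to the connected component through the point produced below). To see $P\neq\emptyset$ I would argue by barriers: each $D_i$ lies in the convex hull of $\Gamma_i$, hence eventually in a fixed $1$-neighborhood $\mathcal N$ of $\mathcal{CH}(\Gamma)$, and the closure of $\mathcal N$ in $\overline{\mathbb{H}^3}$ meets $\partial_\infty\mathbb{H}^3$ only along $\Gamma$. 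Choosing a complete geodesic $\gamma$ joining a point of one component of $\partial_\infty\mathbb{H}^3\setminus\Gamma$ to a point of the other, $\gamma$ must cross every $D_i$ for $i$ large (its two ends then exit $B(O,r_i)$ through opposite complementary disks of $\Gamma_i$ in $S_i$, while $D_i$ separates these), and the crossing points lie in the compact set $\gamma\cap\overline{\mathcal N}$; their subsequential limit is a point of $P$.

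It then remains to identify $P$. Any compact subsurface of $P$ is a smooth limit of compact subsurfaces of the $D_i$, so by lower semicontinuity of area and the minimizing property of the $D_i$ it is itself least area; hence $P$ is area-minimizing and, in particular, properly embedded. Since each $D_i$ is a disk, every compact subsurface of $P$ has genus zero; together with $\partial_\infty P=\Gamma$ (from the same convex-hull containment, together with the barrier argument applied near each point of $\Gamma$, which puts that point into $\partial_\infty P$), this forces $P$ to be a topological plane, and then every compact subdisk of $P$ is least area, as required.

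I expect the last point to be the main obstacle. A priori the $D_i$ could slip off a neighborhood of a non-smooth point of $\Gamma$, so that $\partial_\infty P$ is a proper closed subset of $\Gamma$ — the crossing argument alone only shows that $P$ separates the two complementary disks of $\Gamma$. This is precisely the gap in Anderson's original proof; the fix (see \cite{An83}, with corrected and alternative treatments in \cite{Ga97} and \cite{Co15}) is to pin the $D_i$ near a point $p\in\Gamma$ by an explicit local barrier there, such as a small totally geodesic disk or a catenoidal end, approximating a merely $\mathrm{C}^0$ curve by curves that are smooth at $p$. The remaining steps — compactness, non-triviality, and the minimizing property of the limit — I expect to be routine given the curvature estimate and the convex-hull barrier.
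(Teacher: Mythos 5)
The paper offers no proof of Lemma \ref{plane}: it is quoted from \cite{An83} (see also \cite{Ga97}, \cite{Co15}), so your sketch can only be measured against the standard argument you are reconstructing. On that score it is a faithful outline of Anderson's exhaustion scheme: Morrey/Meeks--Yau disks $D_i$ with boundaries on exhausting spheres, curvature and area estimates for compactness, the convex-hull barrier, and the geodesic-crossing argument for non-triviality are all the right ingredients, and you correctly locate the hard point, namely that $\partial_\infty P$ could a priori be a proper subset of $\Gamma$.

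As written, however, the proposal does not close that gap for the statement actually being proved. The fix you invoke --- pinning the $D_i$ by a local barrier at a smooth point, as in \cite{Co15} --- requires $\Gamma$ to have at least one smooth point, whereas Lemma \ref{plane} is asserted for an arbitrary simple closed curve; this is precisely why the paper later restricts to the class $A^*_0$ when quoting the $H$-plane version (Lemma \ref{disk}). Your suggestion to ``approximate a merely $\textnormal{C}^0$ curve by curves that are smooth at $p$'' does not repair this: passing to the limit of the planes obtained for the approximating curves reintroduces the same slipping phenomenon at the approximation stage, since nothing pins those planes near the non-smooth points of the original $\Gamma$. The known route to the full $\textnormal{C}^0$ statement is Gabai's lamination argument in \cite{Ga97}, which is genuinely different from a barrier at a point. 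A second, more minor, gap: ``every compact subsurface of $P$ has genus zero'' only makes $P$ a planar surface, not a plane --- an annulus would satisfy it. To get simple connectivity you need the standard extra step that a loop $\gamma\subset P$ is approximated by loops $\gamma_i\subset D_i$ bounding least area subdisks $E_i\subset D_i$, which by the isoperimetric inequality and the convex hull property stay in a fixed compact set and converge, with multiplicity one, to a null-homotopy of $\gamma$ inside $P$.
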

 
\noindent Note that the solution given in the above theorem may not be absolutely area minimizing. In \cite{An83}  and \cite{Ha92}, they constructed examples of simple closed curves $\Gamma$ such that if $P$ is a least area plane asymptotic to $\Gamma$ then it cannot be absolutely area minimizing.   \smallskip

\noindent Let $A_0$ be the set of all simple closed curves embedded in $\partial_\infty \mathbb{H}^3$. The $\text{C}^0$-topology on $A_0$ is given as follows:

\begin{defn} \label{nhood}

Let $\Gamma\in \mathbb{H}^3$ be a simple closed curve. Then $\Gamma$ is the image of a continuous embedding $f:S^1\rightarrow \partial_\infty \mathbb{H}^3$. We define an open neighborhood $N_\epsilon(\Gamma)$ around $\Gamma$ as the set of all simple closed curves $\Gamma'$ for which there exists a one-to-one continuous map $f':S^1\rightarrow \partial_\infty \mathbb{H}^3$ with $f'(S^1)=\Gamma'$ such that $\|f(x)-f'(x)\|<\epsilon$ for all $x\in S^1$.

\end{defn}

\begin{rem}
 When $\epsilon>0$ is sufficiently small (relative to the diameter of $\Gamma$) another useful characterization of $N_\epsilon(\Gamma)$ can be given as follows: Let  $A_\epsilon(\Gamma) = \{x\in \partial_\infty \mathbb{H}^3: \text{dist} (x,\Gamma)<\epsilon\} $be an annulus around $\Gamma$. Then $N_\epsilon(\Gamma)$ consists of curves $\Gamma'\subset A_\epsilon(\Gamma)$ which are homotopic to $\Gamma$ in $A_\epsilon(\Gamma)$.
\end{rem}

\noindent The following lemma says that the set of uniquely minimizing smooth simple closed curves are dense. 
\begin{lem}[\cite{Co11}] \label{unique}
Let $\Gamma$ be a simple closed curve. Then there exists a sufficiently small $\epsilon>0$ such that, if $\{\Gamma_t:t\in(-\epsilon,\epsilon)\}$ is a set of simple closed curves foliating the annulus $A_\epsilon(\Gamma)$ around $\Gamma$, then except a countable subset of $(-\epsilon, \epsilon)$ $\Gamma_t$ bounds a unique absolutely area minimizing surface.
\end{lem}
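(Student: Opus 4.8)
The plan is to run the standard sweepout argument based on the maximum principle at infinity, adapted from the flat Plateau problem to the asymptotic one. Fix $\epsilon>0$ small enough that $A_\epsilon(\Gamma)$ is an embedded topological annulus, and let $\{\Gamma_t:t\in(-\epsilon,\epsilon)\}$ be the foliating family. Since each complement $\partial_\infty\mathbb{H}^3\setminus\Gamma_t$ has two components and the leaves are nested, we may orient the foliation and speak of a leaf, or of a surface asymptotic to it, as being \emph{above} or \emph{below} another. By Lemma \ref{ex} and Remark \ref{simp} each $\Gamma_t$ bounds at least one absolutely area minimizing surface, and by Anderson's construction such a surface is the boundary of a region, hence is properly embedded and separates $\mathbb{H}^3$.

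\emph{Step 1 (comparison at infinity).} If $t<s$ then $\Gamma_t\cap\Gamma_s=\emptyset$, and I claim that every absolutely area minimizing surface $\Sigma_t$ asymptotic to $\Gamma_t$ and every $\Sigma_s$ asymptotic to $\Gamma_s$ are disjoint, with $\Sigma_s$ lying strictly above $\Sigma_t$. Indeed, if two area minimizers touched at an interior point, or met along the boundary at infinity, the strong maximum principle would force them to coincide; since their asymptotic boundaries are distinct disjoint Jordan curves this is impossible, so they are disjoint, and the side on which each one lies is determined by which component of the complement of the other's asymptotic boundary contains its own asymptotic boundary.

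\emph{Step 2 (extremal minimizers).} Fix $t$ and a small $\delta>0$. For $s\in(t-\delta,t+\delta)$, Step 1 shows every area minimizer asymptotic to $\Gamma_s$ lies in the region bounded by a fixed minimizer for $\Gamma_{t-\delta}$ (below) and one for $\Gamma_{t+\delta}$ (above); on compact subsets of $\mathbb{H}^3$ this yields uniform area bounds via the monotonicity formula and hence curvature estimates, so for any sequence $s_n\downarrow t$ a diagonal argument over a compact exhaustion produces a subsequential limit $\Sigma_t^+$, a complete absolutely area minimizing surface. The nesting of the leaves pins down $\partial_\infty\Sigma_t^+=\Gamma_t$, since $\Sigma_t^+$ is squeezed between the surfaces asymptotic to $\Gamma_{s'}$ ($s'<t$) and to $\Gamma_{s''}$ ($s''>t$) for all such $s',s''$; a standard monotonicity argument shows the limit does not depend on the chosen sequence, and comparison with a fixed minimizer for any $\Gamma_s$, $s>t$, shows $\Sigma_t^+$ lies weakly above every area minimizer asymptotic to $\Gamma_t$. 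Symmetrically, letting $s_n\uparrow t$ defines $\Sigma_t^-$, lying weakly below every area minimizer for $\Gamma_t$, and $\Sigma_t^-$ lies weakly below $\Sigma_t^+$ (pass to the limit in $\Sigma_{s'}$ below $\Sigma_{s''}$ for $s'<t<s''$). Hence if $\Gamma_t$ bounds more than one absolutely area minimizing surface, then $\Sigma_t^-\neq\Sigma_t^+$; being weakly ordered and distinct they are disjoint by the one-sided maximum principle, so they bound a nonempty open region $N_t\subset\mathbb{H}^3$.

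\emph{Step 3 (counting).} Let $S\subset(-\epsilon,\epsilon)$ be the set of parameters for which uniqueness fails. For $t<s$ in $S$, Step 1 gives $\Sigma_t^+$ strictly below $\Sigma_s^-$, so $N_t$ lies on the opposite side of $\Sigma_t^+$ from $N_s$ and $N_t\cap N_s=\emptyset$. Thus $\{N_t\}_{t\in S}$ is a pairwise disjoint family of nonempty open subsets of $\mathbb{H}^3$; since $\mathbb{H}^3$ is second countable (equivalently, since each $N_t$ has positive volume and the $N_t$ are disjoint in the $\sigma$-finite measure space $\mathbb{H}^3$), the set $S$ is at most countable. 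The main obstacle is Step 2: showing the extremal surfaces $\Sigma_t^{\pm}$ are well defined, and in particular that the asymptotic boundary of the limit does not jump off $\Gamma_t$. This is exactly where the foliation hypothesis is essential, as the leaves immediately above and below $\Gamma_t$ serve as barriers that trap the asymptotic boundary of the limiting surface.
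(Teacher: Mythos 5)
First, note that the paper does not prove this lemma at all: it is imported verbatim from \cite{Co11}, so there is no in-paper argument to compare against. What you have written is a reconstruction of Coskunuzer's generic-uniqueness argument, and its architecture --- extremal (uppermost/lowermost) minimizers $\Sigma_t^{\pm}$ obtained as barrier-trapped limits along the foliation, the canonical open region $N_t$ between them when uniqueness fails, pairwise disjointness of the $N_t$, and a separability count --- is exactly the argument of the cited source. Steps 2 and 3 are sound as sketched: any subsequential limit from above is an area minimizer asymptotic to $\Gamma_t$ lying weakly above every minimizer for $\Gamma_t$, and such a surface is unique (two of them bound regions each containing the other), so independence of the sequence comes for free and is not really needed for the count anyway; the disjointness of the $N_t$ and the second-countability conclusion are correct.

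The one genuine soft spot is Step 1. You justify the disjointness of minimizers asymptotic to the disjoint nested leaves $\Gamma_t$, $\Gamma_s$ by saying that if they ``touched at an interior point'' the strong maximum principle would force them to coincide. The maximum principle only applies when one surface lies locally on one side of the other; it does not, by itself, exclude the possibility that $\Sigma_t$ and $\Sigma_s$ intersect transversally along a curve, which is the case you actually need to rule out. For absolutely area minimizing surfaces the standard fix is the exchange (swap) argument on the enclosed regions: writing $\Sigma_t=\partial\Omega_t$ and $\Sigma_s=\partial\Omega_s$ with $\Omega_t$, $\Omega_s$ the regions determined by the nested asymptotic boundaries, one checks that $\partial(\Omega_t\cap\Omega_s)$ and $\partial(\Omega_t\cup\Omega_s)$ are competitors whose areas sum to at most $A(\Sigma_t)+A(\Sigma_s)$ on every compact set, so both are again absolutely area minimizing; their boundaries are now weakly ordered against the original surfaces, and only then does the strong maximum principle give either coincidence or strict disjointness. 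Since the asymptotic boundaries are distinct, disjointness follows. This is a standard lemma and the conclusion of Step 1 is true, but as written the step skips the part of the argument that carries the content; with that repaired, the proof is complete and matches the approach of \cite{Co11}.
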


\noindent We finish this section with the statement of a bridge principle \emph{at infinity} for uniquely area minimizing surfaces in $\mathbb{H}^{3}$. Let $\Gamma$ be a finite disjoint union of smooth simple closed curves and $\alpha$ be a line segment joining any two distinct points $p$,$q\in\Gamma$ such that $\alpha\cap \Gamma=\{p,q\}$ and $\alpha \perp \Gamma$ (Figure \ref{fig}). For $\epsilon>0$ define a neighborhood of the set $\Gamma\cup\alpha$ by $A_\epsilon(\Gamma\cup\alpha)=\{x\in \partial_\infty \mathbb{H}^3 : \text{dist}(x,\Gamma\cup\alpha)<\epsilon\}$. Note that $\Gamma$ separates the region into two (possibly disconnected) parts $\Omega^+$ and $\Omega^-$ with $\partial\Omega^+=\partial\Omega^-=\Gamma$. Let $\Omega^+$ be the region containing $\alpha$ and $\{\Gamma_t:0<t<\epsilon\}$ be a foliation of $\Omega^+\cap A_\epsilon(\Gamma\cup\alpha)$ such that $\Gamma_t$ converges to $\Gamma\cup\alpha$ as $t$ goes to $0$. Lastly, we call the closure of the set $P_\alpha:=\Omega^+ \cap A_\epsilon(\alpha)$ a \emph{bridge along} $\alpha$. The following theorem is obtained in \cite{MW13} (See also \cite{Co15}).

\begin{figure}[b]
\centering
\includegraphics[scale=0.75]{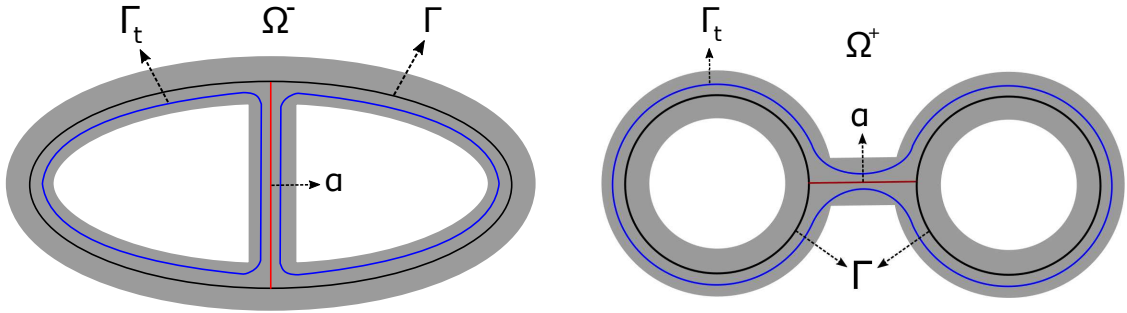}
\caption{$\Gamma$ is connected on the left and $\Omega^+$ is the region enclosed by $\Gamma$. On the right $\Gamma$ is disconnected and $\Omega^-$ is the region enclosed. The gray region is the set $A_\epsilon(\Gamma\cup\alpha)$. }
\label{fig}
\end{figure}

\begin{lem}[Bridge Principle] \label{bridgee}
Let $\Gamma$ be as above and bound a unique absolutely area minimizing surface $\Sigma$. Then (with the notation above) there exists a sequence $t_n\in(0,\epsilon)$ converging to $0$ such that $\Gamma_{t_n} \subset \Omega^+\cap A_\epsilon(\Gamma\cup\alpha)$ bounds a unique absolutely area minimizing surface $\Sigma_n$ and that $\Sigma_n$ is homeomorphic to $\overline{\Sigma}\cup \overline{P_\alpha}$(closures taken in the closed ball $ \mathbb{H}^3 \cup \partial_\infty \mathbb{H}^3$).

\end{lem}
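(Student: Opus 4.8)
The plan is to follow the strategy of Martin--White \cite{MW13} (cf. \cite{Co15}), combining the existence theorem (Lemma \ref{ex} and Remark \ref{simp}), interior compactness for area minimizers, the uniqueness hypothesis on $\Sigma$, and the generic uniqueness along foliations (Lemma \ref{unique}). For each $t \in (0,\epsilon)$, Lemma \ref{ex} provides an absolutely area minimizing surface $\Sigma_t$ with $\partial_\infty \Sigma_t = \Gamma_t$; since $\Gamma_t$ is a single curve and $\mathbb{H}^3$ admits no closed minimal surfaces, each $\Sigma_t$ is connected. A priori I control neither the topology nor the uniqueness of $\Sigma_t$, so the first task is to analyze the family $\{\Sigma_t\}$ as $t \to 0$.

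First I would establish convergence. Because $\Gamma_t \to \Gamma \cup \alpha$ in the Hausdorff distance and $\Gamma$ is smooth, I would prove uniform local area bounds for the $\Sigma_t$ on compact subsets of $\mathbb{H}^3$ by constructing explicit barrier surfaces capping off $\Gamma_t$ with controlled area and applying the monotonicity formula. Combined with the curvature estimates available for stable minimizing surfaces, this yields, after passing to a subsequence $t_n \to 0$, smooth convergence on compact subsets of $\mathbb{H}^3$ to an absolutely area minimizing $\Sigma_\infty$ with $\partial_\infty \Sigma_\infty \subseteq \Gamma \cup \alpha$. Since the arc $\alpha$ is one-dimensional and the bridge $P_\alpha$ shrinks as $t \to 0$, a removable-singularity and maximum-principle argument shows $\alpha$ contributes no component to the limit, so $\partial_\infty \Sigma_\infty = \Gamma$; the uniqueness hypothesis then forces $\Sigma_\infty = \Sigma$. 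Thus on every compact set $\Sigma_{t_n} \to \Sigma$, and the bridge survives only as a feature near the asymptotic boundary along $\alpha$.

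The crux---and the step I expect to be the main obstacle---is to show that for all sufficiently small $t$ the surface $\Sigma_t$ is homeomorphic to $\overline{\Sigma} \cup \overline{P_\alpha}$, i.e. to $\Sigma$ with exactly one thin bridge attached along $\alpha$. Away from a fixed neighborhood of $\alpha$, the smooth convergence $\Sigma_{t_n} \to \Sigma$ together with the implicit function theorem (using that $\Sigma$ is smooth and, being uniquely minimizing, nondegenerate) expresses $\Sigma_{t_n}$ as a normal graph over $\Sigma$, hence of the same topological type there. The difficulty is entirely concentrated near $\alpha$, where I must rule out both the pinching of the neck (which would make the limit split into the two surfaces spanning the subregions cut off by $\alpha$) and the formation of additional necks or handles. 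I would settle this by a careful renormalized-area comparison on compact exhaustions: a single bridge along $\alpha$ costs an amount of area comparable to $\mathrm{Area}(P_\alpha) = o(1)$, whereas any competitor that disconnects or carries extra necks costs strictly more, contradicting the minimality of $\Sigma_t$ for small $t$. This area bookkeeping, together with the connectedness of $\Sigma_t$, pins down the homeomorphism type.

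Finally, to promote this to uniqueness along a sequence, I would invoke Lemma \ref{unique}: all but countably many $t \in (0,\epsilon)$ give a curve $\Gamma_t$ bounding a \emph{unique} absolutely area minimizing surface. Intersecting this co-countable set with the set of $t$ small enough for the topology argument above to apply produces a sequence $t_n \to 0$ for which $\Gamma_{t_n}$ bounds a unique absolutely area minimizing surface $\Sigma_n$, necessarily homeomorphic to $\overline{\Sigma} \cup \overline{P_\alpha}$. This yields the asserted sequence and completes the proof.
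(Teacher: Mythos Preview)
The paper does not supply its own proof of this lemma; it is stated as a result quoted from \cite{MW13} (with a cross-reference to \cite{Co15}). So there is no in-paper argument to compare against. Your proposal is a reasonable outline of the Martin--White strategy and aligns with what the paper cites, so in that sense it is consistent with the paper's ``proof''.

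Two small points worth flagging in your sketch. First, you assert that each $\Gamma_t$ is a single curve in order to deduce connectedness of $\Sigma_t$; in the generality of the lemma, $\Gamma$ is allowed to be a finite disjoint union of simple closed curves, and attaching a bridge can either increase or decrease the number of components of $\Gamma_t$ (cf.\ the two applications in Proposition~\ref{dense}), so this step as written does not go through in general. Second, your appeal to Lemma~\ref{unique} for generic uniqueness is stated in the paper only for foliations of an annular neighborhood of a single simple closed curve, whereas here the $\Gamma_t$ foliate $\Omega^+\cap A_\epsilon(\Gamma\cup\alpha)$; the underlying argument from \cite{Co11} does extend to this situation, but you should say so rather than invoke the lemma as stated. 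Neither issue is fatal to the overall strategy.
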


\section{Generic non-uniqueness of $H$-surfaces}\label{gener}

In this section we prove our main theorem. We will first prove the theorem for minimal surfaces. Then we will generalize to $0<|H|<1$. \smallskip
\subsection{The case $H=0$}
Let $A_0$ be the space of simple closed curves with $\text{C}^0$-topology (Definition \ref{nhood}) and let $A_1$ be the subset of $A_0$ consisting of curves for which any absolutely area minimizing surface has positive genus.
\begin{prop} \label{open}
$A_1$ is an open subset of the space of all simple closed curves.
\end{prop}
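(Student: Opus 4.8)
The plan is to show that the complement $A_0 \setminus A_1$ is closed, i.e. that the property ``bounds an absolutely area minimizing surface of genus zero'' is preserved under $\text{C}^0$-limits of simple closed curves. So I would start with a sequence $\Gamma_n \in A_0 \setminus A_1$ converging to a simple closed curve $\Gamma$ in the supremum metric, and for each $n$ pick an absolutely area minimizing surface $\Sigma_n$ asymptotic to $\Gamma_n$ that has genus zero (such a $\Sigma_n$ exists by definition of $A_0 \setminus A_1$, using Lemma \ref{ex} together with Remark \ref{simp} for existence, and the genus-zero hypothesis for the topological type). The goal is to extract a subsequential limit $\Sigma$ which is an absolutely area minimizing surface asymptotic to $\Gamma$ and still has genus zero; this would place $\Gamma$ in $A_0 \setminus A_1$ and finish the proof.

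The key steps, in order, are: (1) \emph{Uniform area/curvature control on compact sets.} On any fixed compact region $K \subset \mathbb{H}^3$, the surfaces $\Sigma_n$ have uniformly bounded area (since they are area minimizing and one can build comparison surfaces with controlled area using the convergence $\Gamma_n \to \Gamma$ and a fixed competitor near $\Gamma$), hence by the monotonicity formula and the standard curvature estimates for area-minimizing hypersurfaces (Schoen--Simon, or in dimension two Schoen's estimate), they have uniformly bounded second fundamental form on slightly smaller compact sets. (2) \emph{Smooth subsequential convergence on compact sets.} By a diagonal argument over an exhaustion of $\mathbb{H}^3$ by compact sets, a subsequence of $\Sigma_n$ converges smoothly (with multiplicity one, again by monotonicity and the fact that the limit is embedded) on compact subsets to a complete embedded surface $\Sigma$, which is again absolutely area minimizing since area minimization passes to smooth limits. (3) \emph{Correct asymptotic boundary.} One must check $\partial_\infty \Sigma = \Gamma$: the $\Sigma_n$ stay in a controlled neighborhood of the convex hull of $\Gamma_n$, and these convex hulls converge to that of $\Gamma$; combined with a barrier argument near $\partial_\infty \mathbb{H}^3$ this pins the asymptotic boundary of the limit to $\Gamma$ exactly (neither smaller nor larger). (4) \emph{Genus does not increase in the limit.} Since the convergence is smooth on compact sets and each $\Sigma_n$ has genus zero, any embedded closed loop in $\Sigma$ bounding a compact subsurface can be pushed into a large compact region, approximated by a loop on $\Sigma_n$ for large $n$, which bounds a genus-zero subsurface there; a limiting argument shows the corresponding subsurface of $\Sigma$ has genus zero. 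Hence $\Sigma$ has genus zero.

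I expect the main obstacle to be step (3), controlling the asymptotic boundary of the limit. The smooth-compact convergence gives no information at infinity, so one has to rule out two bad scenarios: the limit surface ``losing'' part of the boundary (e.g. degenerating or pinching off near $\partial_\infty \mathbb{H}^3$, so that $\partial_\infty \Sigma \subsetneq \Gamma$ or $\Sigma$ fails to be connected in the right way), and the limit ``gaining'' boundary or spreading out. The standard remedy, which I would invoke, is the maximum-principle/barrier machinery for minimal surfaces in $\mathbb{H}^3$: every $\Sigma_n$ is trapped between geodesic-plane barriers determined by round circles in $\partial_\infty \mathbb{H}^3$ disjoint from $\Gamma_n$, and since $\Gamma_n \to \Gamma$ uniformly these barriers converge, forcing $\partial_\infty \Sigma = \Gamma$; one also uses that a nonempty area-minimizing surface cannot have empty asymptotic boundary. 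A secondary subtlety is justifying the uniform local area bound in step (1) when $\Gamma$ is merely $\text{C}^0$ (no regularity), which is handled by using the global area bound coming from comparison with a fixed least-area surface for a slightly larger curve together with the monotonicity formula, rather than any local graphical representation. These are by now classical arguments in the asymptotic Plateau problem (cf. \cite{An82}, \cite{An83}, \cite{Co11}), so once they are assembled the conclusion that $A_0 \setminus A_1$ is closed, hence $A_1$ open, follows immediately.
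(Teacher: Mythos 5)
Your proposal is correct and follows essentially the same route as the paper: show $A_0\setminus A_1$ is closed by extracting a smooth, multiplicity-one limit of genus-zero absolutely area minimizing surfaces via local area and curvature bounds, and then transferring the genus bound to the limit. The only notable difference is that the paper gets the local area bound more directly by comparing $\Sigma_n\cap K$ with $\partial K\cap\Omega_n$ (where $\Omega_n$ is a component of the complement of $\Sigma_n$), so that $A(\partial K)$ is a uniform bound, and it cites a global curvature estimate for stable minimal surfaces in $\mathbb{H}^3$ rather than Schoen--Simon.
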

\begin{proof}
We will show that $A_0\setminus A_1$ is closed. Let $\Gamma_n$ be a sequence in $A_0\setminus A_1$ which converges to some $\Gamma\in A_0$ and $\Sigma_n$ be a sequence of absolutely area minimizing surfaces with $\partial_\infty \Sigma_n = \Gamma_n$ and that $\Sigma_n$ is homeomorphic to a disk  for each $n$. Then we can extract a subsequence of $\Sigma_n$ which converges smoothly on compact sets to an absolutely area minimizing surface $\Sigma$ with asymptotic boundary $\Gamma$. This follows from the existence of local area and curvature bounds (See Theorem 4.37 in \cite{MRR99}). In order to find local area bounds, let $\Omega_n$ be one of the connected open sets separated by $\Sigma_n$ in $\mathbb{H}^3$. Then for any compact domain $K$, since $\Sigma_n$ is absolutely area minimizing, $A(\Sigma_n\cap K)\leq A(\partial K \cap \Omega_n)$ where $A$ denotes the area. The latter has always less area than $\partial K$ so that $A(\partial K)$ serves as a local area bound. Also, by \cite{RST10}, there exists uniform global curvature bound for stable minimal surfaces in $\mathbb {H}^3$. 
Note also that since each $\Sigma_n$ is area minimizing, the convergence is with multiplicity $1$. From this it follows that for any compact domain $K$ and for a sufficiently large $n$, $\Sigma_n \cap K$ can be written as a normal graph over $\Sigma\cap K$ so that both surfaces are homeomorphic. Since this holds for any compact domain and $\Sigma_n$ is of genus $0$, $\Sigma$ must have genus $0$ too. Now we have $\partial_\infty \Sigma=\Gamma $ and $ \Gamma \in A_0\setminus A_1$, so the proof is complete.
\end{proof}

\noindent Next we show that $A_1$ is also dense.

\begin{prop}\label{dense}
$A_1$ is dense in the space of all simple closed curves.
\end{prop}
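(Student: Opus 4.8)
The plan is to show that every simple closed curve $\Gamma$ can be approximated in the $\text{C}^0$-topology by a curve lying in $A_1$, by modifying $\Gamma$ with a small bridge attached at infinity. First I would reduce to the smooth uniquely minimizing case: given $\Gamma \in A_0$ and $\epsilon>0$, apply Lemma \ref{unique} to a foliation $\{\Gamma_t\}$ of the annulus $A_{\epsilon/2}(\Gamma)$ to obtain some $\Gamma' = \Gamma_{t_0} \in N_{\epsilon/2}(\Gamma)$ which is a smooth simple closed curve bounding a \emph{unique} absolutely area minimizing surface $\Sigma'$. (Here one may also smooth the $\Gamma_t$ if the given foliation is merely continuous; the point is only that such a $\Gamma'$ exists arbitrarily close to $\Gamma$.) If $\Sigma'$ already has positive genus we are done, so assume $\Sigma'$ is a disk.

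Next I would build the target curve by attaching a bridge whose effect is to raise the genus. Choose two pairs of points on $\Gamma'$ and two short disjoint line segments $\alpha_1, \alpha_2$ meeting $\Gamma'$ orthogonally, each of length $< \epsilon/4$, positioned so that $\Gamma' \cup \alpha_1 \cup \alpha_2$ — after the bridge construction — has the homotopy type of a curve bounding a once-punctured torus rather than a disk; concretely, attach the first bridge along $\alpha_1$ and then the second bridge along $\alpha_2$ so that the two handles together create one handle of genus one in the resulting surface. By Lemma \ref{bridgee} applied once, there is $t_n \to 0$ with $\Gamma'_{t_n}$ bounding a \emph{unique} absolutely area minimizing surface $\Sigma_1$ homeomorphic to $\overline{\Sigma'} \cup \overline{P_{\alpha_1}}$; since $\Sigma_1$ is again smooth and uniquely minimizing, apply Lemma \ref{bridgee} a second time along $\alpha_2$ (for $t_n$ small the perturbed curve stays within the chosen annular neighborhoods) to obtain $\widetilde{\Gamma}$ bounding a unique absolutely area minimizing surface $\widetilde{\Sigma}$ homeomorphic to $\overline{\Sigma'} \cup \overline{P_{\alpha_1}} \cup \overline{P_{\alpha_2}}$. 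For the correct choice of the two bridges this surface is homeomorphic to a once-punctured torus, hence has genus one; and since it is the \emph{unique} absolutely area minimizing surface for $\widetilde{\Gamma}$, every absolutely area minimizing surface with asymptotic boundary $\widetilde{\Gamma}$ has positive genus, i.e. $\widetilde{\Gamma} \in A_1$. Choosing all the segments and all the $t_n$ small enough keeps $\widetilde{\Gamma} \in N_\epsilon(\Gamma)$, which proves density.

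The main obstacle is the combinatorial/topological verification that a pair of bridges can be attached to a disk so that the resulting uniquely minimizing surface genuinely has genus one: one must place $\alpha_1, \alpha_2$ (and orient the bridges $P_{\alpha_1}, P_{\alpha_2}$) so that the handle additions are "linked" rather than producing a planar surface with two extra boundary bumps, and one must check that the second application of Lemma \ref{bridgee} is legitimate — that is, that $\Sigma_1$ is still smooth and uniquely minimizing and that $\alpha_2$ can be taken orthogonal to the new smooth boundary curve $\Gamma'_{t_n}$ with $\alpha_2 \cap \Gamma'_{t_n}$ consisting of exactly two points, uniformly in $n$. A secondary subtlety is bookkeeping with the $\text{C}^0$-distances: the bridge principle only yields a \emph{sequence} $t_n \to 0$, so one should fix $\epsilon$ first, then choose the segments short relative to $\epsilon$, then pass to $n$ large; since $\Gamma'_{t_n} \to \Gamma' \cup \alpha_1$ uniformly and $\Gamma'$ is within $\epsilon/2$ of $\Gamma$, all curves in the construction lie in $N_\epsilon(\Gamma)$ once $n$ is large enough.
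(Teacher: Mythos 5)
Your proposal follows the same strategy as the paper: use Lemma \ref{unique} to replace $\Gamma$ by a nearby smooth uniquely minimizing curve, and then apply the bridge principle of Lemma \ref{bridgee} twice to force genus one while staying inside $N_\epsilon(\Gamma)$. The $\text{C}^0$-bookkeeping you describe (fix $\epsilon$, choose segments short, then take $t_n$ small) matches the paper as well.

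The one place where you stop short is exactly the point you flag as ``the main obstacle'': you assert that \emph{for the correct choice} of the two bridges the resulting surface is a once-punctured torus, but you do not say what that choice is, and your setup --- picking both segments $\alpha_1,\alpha_2$ on the original curve $\Gamma'$ at the outset --- obscures the mechanism. The paper's resolution is concrete and worth internalizing. The first bridge $P_{\alpha}$ attached to the disk $\Sigma$ produces a curve $\Gamma_u'$ that is a \emph{disjoint union of two simple closed curves} (the bridge construction replaces $\Gamma_u\cup\alpha$ by the boundary of a thin neighborhood on one side, which has two components when $\alpha$ joins a connected curve to itself), and correspondingly $\Sigma'\cong\overline{\Sigma}\cup\overline{P_\alpha}$ is an annulus, not a disk with a bump. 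The second segment $\alpha'$ is then chosen \emph{on the new curve} $\Gamma_u'$, joining points on its two \emph{distinct} components; attaching a bridge between the two boundary circles of an annulus yields a genus-one surface with one boundary circle. So the ``linking'' you worry about is not a delicate positioning condition on two segments chosen in advance: it is automatic once you choose the second bridge after the first one, connecting the two components that the first bridge created. This also disposes of your secondary worry about $\alpha_2$ being orthogonal to the perturbed curve uniformly in $n$: since $\alpha'$ is chosen on the already-constructed smooth curve $\Gamma_{t_n}'$, no uniformity in $n$ is needed --- you fix one $n$ large enough that $\Gamma_{t_n}'\subset A_{\epsilon/2}(\Gamma_u)$ and then run the second bridge argument on that single curve. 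With this correction your argument closes up and agrees with the paper's proof.
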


\begin{proof}

Let $\Gamma$ be a simple closed curve and $\epsilon>0$. By Lemma \ref{unique} there exists a smooth simple closed curve $\Gamma_u\in N_{\epsilon/2}(\Gamma)$ (Definition \ref{nhood})  which bounds a unique absolutely area minimizing surface $\Sigma$. Suppose that $\Gamma_u \notin A_1$, i.e. $\Sigma$ is a plane. Since $\Gamma_u$ is a uniquely minimizing curve we can apply the bridge principle. 

\noindent Let $\alpha$ be a line segment joining two distinct points of $\Gamma_u$ satisfying the hypothesis of Lemma \ref{bridgee} and such that $\Gamma_u\cup \alpha$ lies in the annulus $A_{\epsilon/2}(\Gamma_u)$ around $\Gamma_u$. Then by the bridge principle we can find a curve $\Gamma_u'\subset A_{\epsilon/2}(\Gamma_u)$ which is a disjoint union of two simple closed curves and bounds a unique absolutely area minimizing surface $\Sigma'$ (Figure \ref{fig:Bridge}). Furthermore $\overline{\Sigma'}$ is homeomorphic to $\overline\Sigma\cup \overline {P_\alpha}$ where $\overline {P_\alpha}$ is a \emph{bridge along} $\alpha$ (for the definition see the last paragraph in the previous section and also note that closures taken in the closed ball $ \mathbb{H}^3 \cup \partial_\infty \mathbb{H}^3$ ). Since $\Sigma$ is a plane, $\Sigma'$ is homeomorphic to an annulus. Now we can apply the bridge principle to $\Sigma'$ because it is also a uniquely area minimizing surface. 
\noindent Let $\alpha'$ be a line segment connecting two distinct points of $\Gamma_u'$ which lies on distinct components of $\Gamma_u'$ and such that $\Gamma_u'\cup \alpha'$ still lies inside $A_{\epsilon/2}(\Gamma_u)$. Then by the bridge principle again, we can find a new curve $\Gamma_u''\subset A_{\epsilon/2}(\Gamma_u)$ which is a simple closed curve which bounds a unique absolutely area minimizing surface $\Sigma''$. Furthermore, $\overline{\Sigma''}$ is homeomorphic to $\overline{\Sigma'}\cup \overline{P_{\alpha'}}$ where $P_{\alpha'}$ is a bridge along $\alpha'$. Since $\Sigma'$ is an annulus and the bridge $ \overline{P_{\alpha'}}$ connects distinct components of $\partial \overline {\Sigma'}$, $\Sigma''$ is homeomorphic to a surface of genus one with a disk removed. Since $\Gamma''_u$ is a simple closed curve lying inside $A_{\epsilon/2}(\Gamma_u)$ and homotopic to $\Gamma_u$, $\Gamma''_u$ belongs to $N_{\epsilon/2}(\Gamma_u)$. So it also belongs to $ N_{\epsilon}(\Gamma)$. Lastly, $\Gamma''_u$ bounds a unique absolutely area minimizing surface of genus one so that $\Gamma''_u\in A_1$. This completes the proof.

\end{proof}

\begin{figure}
\centering
\includegraphics[scale=0.75]{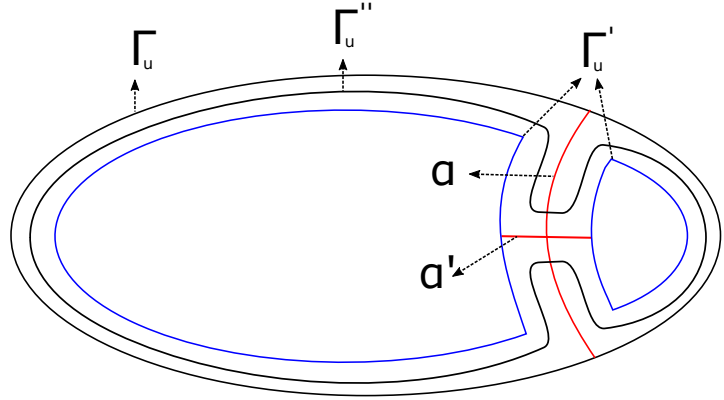}
\caption{We can apply the bridge principle successively to a uniquely minimizing curve $\Gamma_u$ successively in a neighborhood.}
\label{fig:Bridge}
\end{figure}

\begin{rem}\label{genus}
Let $g>0$ be an integer and let $A_g$ denote the set of all simple closed curves for which all absolutely area minimizing surfaces have genus greater than or equal to $g$. Then the previous proofs can be generalized to show that $A_g$ is open and dense for every $g\in \mathbb{N}$. This gives us the stratification of the space of simple closed curves as $A_0 \supset A_1 \supset A_2\supset ... \supset A_g  \supset... $ where $A_{g+1}$ is an open and dense subset of  $A_g$ for every $g\in \mathbb{N}$. 
\end{rem}

\begin{thm}\label{main}
The set of simple closed curves which bound more than one complete embedded minimal surface contains an open and dense subset of $A_0$.
\end{thm}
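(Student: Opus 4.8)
The plan is to take the open and dense set $A_1$ of Propositions \ref{open} and \ref{dense} as the required subset, and to verify that every curve in it bounds at least two distinct complete embedded minimal surfaces. So fix $\Gamma \in A_1$. By Lemma \ref{ex} together with Remark \ref{simp}, $\Gamma$ is the asymptotic boundary of a complete absolutely area minimizing surface $\Sigma$; since we are in dimension $n=2$ the interior regularity theory makes $\Sigma$ a smooth embedded surface, and being area minimizing it is in particular minimal. By the very definition of $A_1$, this $\Sigma$ has positive genus. On the other hand, Lemma \ref{plane} furnishes a complete embedded least area plane $P$ with $\partial_\infty P = \Gamma$, which is again a complete embedded minimal surface but has genus zero.

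The next step is simply to note that $P$ and $\Sigma$ cannot coincide: $P$ is a topological disk while $\Sigma$ has genus $\geq 1$, so they are different embedded surfaces in $\mathbb{H}^3$. Hence $\Gamma$ bounds more than one complete embedded minimal surface. Since this argument applies verbatim to every $\Gamma \in A_1$, and $A_1$ is open (Proposition \ref{open}) and dense (Proposition \ref{dense}) in $A_0$, the set of simple closed curves bounding more than one complete embedded minimal surface contains the open dense set $A_1$, which is exactly the assertion of the theorem.

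I do not expect a genuine obstacle here; the one point to be careful about is that the two surfaces produced really are distinct \emph{objects} and not just solutions of two different variational problems that happen to agree. That is handled cleanly by the genus count, with no analytic input required — indeed it is precisely to make this count available that we passed to $A_1$ rather than working with an arbitrary curve. (If one wished to upgrade the conclusion, the same reasoning combined with the stratification $A_0 \supset A_1 \supset A_2 \supset \cdots$ from Remark \ref{genus} and Lemma \ref{plane} shows that for each $g$ there is an open dense family of curves bounding simultaneously a genus-zero plane and an absolutely area minimizing surface of genus at least $g$.)
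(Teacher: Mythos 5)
Your argument is exactly the paper's: take the open dense set $A_1$, pair the least area plane from Lemma \ref{plane} (genus zero) against an absolutely area minimizing surface from Lemma \ref{ex} and Remark \ref{simp} (positive genus by definition of $A_1$), and distinguish them by genus. The proposal is correct and just spells out the distinctness step that the paper leaves implicit.
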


\begin{proof}

We know that every simple closed curve bounds a least area plane by Lemma \ref{plane}. From this it follows that for all $\Gamma\in A_1$ there exist at least two complete embedded minimal surfaces with asymptotic boundary $\Gamma$. Since $A_1$ is open and dense by the previous propositions, the result follows.

\end{proof}

\subsection{$H$-surfaces}\label{sectt}

In this section we generalize our result to $H$-surfaces, where $|H|<1$. Following the formulation of \cite{To96} and the terminology of \cite{Co05}, we first state the generalization of asymptotic Plateau problem for $H$-surfaces. For this we will use the upper half-space model for $ \mathbb{H}^3 =\{(x,y,z)\in  \mathbb{R}^3 | z>0\}$. So $\partial_\infty \mathbb{H}^3$ can be identified with the one-point compactification of $\mathbb{R}^2 \times \{0\}$.

\begin{defn}\label{harea}
Let $\Sigma\subset  \mathbb{H}^3$ be complete embedded surface with asymptotic boundary $\Gamma$. Then $\Gamma$ separates $\mathbb{R}^2 \times \{0\}$ into two domains one of which is bounded. Let $O\subset\mathbb{R}^2 \times \{0\}$ be the bounded part and $\Omega_\Sigma \subset \mathbb{H}^3 $ be the region enclosed by $\Sigma\cup O$. We say that $\Sigma$ is a $H$-\emph{minimizing} surface if for any compact domain $K\subset \mathbb{H}^3 $, $\Sigma$ minimizes the functional $I^H_K=A(\Sigma\cap K)+2HV(\Omega_\Sigma \cap K)$ where $A$ denotes the area and $V$ denotes the volume. We will also say that $\Sigma$ is \emph{uniquely} $H$-\emph{minimizing} if it is the unique $H$-minimizing surface with its asympotic boundary. Lastly, if $\Sigma$ is a complete embedded plane which minimizes the functional among all disks on each compact domain, then we will call $\Sigma$ a $H$-\emph{minimizing plane}.
\end{defn}
\begin{lem}[\cite{AR97}, \cite{To96}]
Let $\Gamma\subset \partial_\infty \mathbb{H}^3$ be $\textnormal{C}^1$ curve and $|H|<1$. Then there exists a complete embedded $H$-minimizing surface (with respect to the normal field pointing into  $\Omega_\Sigma$) with asymptotic boundary $\Gamma$.
\end{lem}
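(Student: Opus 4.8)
The plan is to construct $\Sigma$ by solving the minimization problem of Definition~\ref{harea} directly in the framework of sets of finite perimeter, and then to upgrade the minimizer to a smooth, properly embedded surface with asymptotic boundary exactly $\Gamma$. Two features drive the argument: the hypothesis $|H|<1$ makes the variational problem coercive and the resulting surfaces regular, while the $C^1$ hypothesis on $\Gamma$ supplies barriers at infinity.

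Work in the upper half-space model, let $O$ be the bounded region enclosed by $\Gamma$, and fix a reference Caccioppoli set $E_0\subset\mathbb{H}^3$ whose reduced boundary is a smooth embedded surface with asymptotic boundary $\Gamma$ (for instance the region under a fixed smooth graph over $O$). Over the class of Caccioppoli sets $E$ that agree with $E_0$ outside some compact subset of $\mathbb{H}^3$, consider the relative functional
\[
\mathcal{F}(E)=\big(\operatorname{Per}(E)-\operatorname{Per}(E_0)\big)+2H\big(\operatorname{Vol}(E)-\operatorname{Vol}(E_0)\big),
\]
which is finite on this class and bounded below: by the linear isoperimetric inequality $\operatorname{Per}(F)\ge 2\operatorname{Vol}(F)$ for finite-volume sets in $\mathbb{H}^3$ one gets $\mathcal{F}(E)+C(E_0)\ge(1-|H|)\operatorname{Per}(E)\ge 0$ on each compact piece, so $|H|<1$ prevents minimizing sequences from degenerating and bounds their perimeter locally. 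By $BV$ compactness on a compact exhaustion and a diagonal argument there is an $L^1_{\mathrm{loc}}$ limit $E$, and lower semicontinuity of the perimeter together with continuity of the volume term shows that $E$ minimizes $\mathcal{F}$.

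The interior regularity is then standard. The first variation of $\mathcal{F}$ identifies $\Sigma:=\partial^*E$ as a hypersurface of constant mean curvature $H$ with respect to the normal pointing into $\Omega_\Sigma=E$, and since $\partial^*E$ is an almost minimizer of the area (the volume term is a bounded perturbation), the interior regularity theory of geometric measure theory makes $\Sigma$ smooth away from a closed set of Hausdorff codimension at least $7$, hence smooth everywhere in $\mathbb{H}^3$, and automatically embedded because it bounds $E$. Being area minimizing up to a bounded perturbation, $\Sigma$ is stable, so the $|H|<1$ analogue of the a priori curvature estimates used in the minimal case applies and $\Sigma$ is properly embedded and complete.

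The main obstacle is to show that $\partial_\infty\Sigma=\Gamma$ \emph{exactly}, since a priori the minimizer could recede from part of $\Gamma$ or develop extra ideal boundary (including the point at infinity), and it is here that the $C^1$ hypothesis is essential. For every $|H|<1$ and every round circle $C\subset\partial_\infty\mathbb{H}^3$ there is a complete rotationally invariant $H$-surface with asymptotic boundary $C$, namely an equidistant hypersphere over the totally geodesic plane spanning $C$, whose constant mean curvature ranges over $(-1,1)$ as the distance varies. At a point $p\in\Gamma$ the $C^1$ tangent line of $\Gamma$ lets one trap $\Gamma$ near $p$ between such a circle lying locally inside $\Gamma$ and one lying locally outside; comparing $E$ with competitors obtained by cutting along the corresponding $H$-surfaces, and using the strong maximum principle for constant mean curvature hypersurfaces, confines $\partial^*E$ to an arbitrarily thin neighbourhood of $\Gamma$ near infinity. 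This forces $\overline{\Sigma}\cap\partial_\infty\mathbb{H}^3=\Gamma$, and a final maximum-principle and connectedness argument removes compact components and shows that $\Sigma$ is a single surface asymptotic to all of $\Gamma$, which is the assertion. (One may equivalently run the scheme by minimizing on an exhaustion by geodesic balls and passing to the limit with the curvature estimates, the barrier argument at infinity being the same.)
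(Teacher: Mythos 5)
This lemma is quoted in the paper directly from \cite{AR97} and \cite{To96}; the paper supplies no proof of its own (its only original contribution here is the subsequent Claim extending the statement to arbitrary simple closed curves by polygonal approximation and compactness). So there is no in-paper argument to compare against; what you have written is a reconstruction of the cited proofs, and it follows the geometric-measure-theoretic route of Tonegawa: minimize the relative functional over Caccioppoli sets, use the linear isoperimetric inequality $\operatorname{Per}\ge 2\operatorname{Vol}$ together with $|H|<1$ for coercivity and local perimeter bounds, get interior regularity from almost-minimality, and control the ideal boundary by comparison with equidistant surfaces over round circles. That is the correct skeleton, and it is genuinely informative since the paper omits it. Two points deserve tightening. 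First, in the coercivity step the isoperimetric inequality must be applied to $E\cap K$ \emph{including} the contribution of $\partial K\cap E$ to the perimeter; since competitors agree with $E_0$ near $\partial K$ this extra term is a fixed constant, but as written the inequality $\mathcal{F}(E)+C(E_0)\ge(1-|H|)\operatorname{Per}(E)$ skips this bookkeeping (the exhaustion variant you mention at the end is the cleaner way to organize it). Second, and more substantively: for a curve that is merely $\textnormal{C}^1$ you cannot in general trap $\Gamma$ near $p$ between a tangent circle lying locally inside and one lying locally outside --- that requires interior and exterior ball conditions, i.e. $\textnormal{C}^{1,1}$ control. The barrier argument should instead be run as follows: for $\partial_\infty\Sigma\subset\Gamma$ take any $q\notin\Gamma$, choose a round circle separating $q$ from $\Gamma$, and use the appropriately oriented $H$-equidistant surface asymptotic to that circle as a one-sided barrier (no regularity of $\Gamma$ is needed); for $\Gamma\subset\partial_\infty\Sigma$ use that $E$ and its complement both accumulate on every point of $\partial O$, again forced by barriers placed under small disks of $O$ and of its complement. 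Indeed, the paper's own remark after the lemma notes that Tonegawa's construction only needs $O$ to have locally finite perimeter, so the $\textnormal{C}^1$ hypothesis is not what identifies the asymptotic boundary; it enters in the boundary regularity at infinity and in the different, graph-and-Perron-type approach of \cite{AR97}. With these two repairs your argument is a faithful account of the cited existence proof.
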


\begin{rem}
As noted in \cite{To96} it is enough to assume $O$ to be a set of locally finite perimeter which in turn is rectifiable. Any simple closed curve can be approximated by rectifiable sets which allows us to extend the theorem to simple closed curves. See also Remark \ref{simp}.
\end{rem}

\begin{cla*}
The previous theorem still holds if we take $\Gamma$ to be a simple closed curve.
\end{cla*}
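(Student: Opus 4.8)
The claim asserts that the existence result for $H$-minimizing surfaces, stated for $\text{C}^1$ curves, extends to arbitrary simple closed curves $\Gamma \subset \partial_\infty \mathbb{H}^3$. The plan is to mimic the approximation argument already used for the $H=0$ case (Remark \ref{simp}) and for the rectifiable-set observation in the preceding remark. First I would approximate $\Gamma$ in the $\text{C}^0$-topology by a sequence of $\text{C}^1$ (indeed polygonal or smooth) simple closed curves $\Gamma_n$, for instance inscribed polygons smoothed at the corners, so that $\Gamma_n \to \Gamma$ uniformly and the bounded complementary domains $O_n \subset \mathbb{R}^2\times\{0\}$ converge to $O$ (in the sense of finite perimeter / $L^1$ convergence of characteristic functions). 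By the lemma of Tonegawa and Alencar--Rosenberg, each $\Gamma_n$ bounds a complete embedded $H$-minimizing surface $\Sigma_n$ with the correct normal orientation.

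Next I would establish uniform local area and curvature bounds for the $\Sigma_n$ on compact subsets of $\mathbb{H}^3$, exactly paralleling the proof of Proposition \ref{open}. For the area bound, since $\Sigma_n$ minimizes $I^H_K = A(\Sigma_n\cap K) + 2HV(\Omega_{\Sigma_n}\cap K)$, comparing with a competitor obtained by pushing $\Sigma_n\cap K$ onto $\partial K$ gives $A(\Sigma_n\cap K) \le A(\partial K) + 2|H|\,V(K)$, using $|H|<1$ to absorb the volume term is not even necessary here since the right-hand side is already a fixed constant depending only on $K$. For the curvature bound one invokes the uniform a priori curvature estimates for stable $H$-surfaces in $\mathbb{H}^3$ (the analogue for $H$-surfaces of \cite{RST10}, valid since $|H|<1$ keeps the operator uniformly elliptic), together with the fact that $H$-minimizing surfaces are stable. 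With these bounds and a diagonal argument over an exhaustion of $\mathbb{H}^3$ by compact sets, I would extract a subsequence of $\Sigma_n$ converging smoothly on compact subsets, with multiplicity one (again because of minimality), to a complete embedded surface $\Sigma$ of constant mean curvature $H$.

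Finally I would check that $\Sigma$ has the desired asymptotic boundary and minimizing property. That $\partial_\infty \Sigma = \Gamma$ follows from the uniform convergence $\Gamma_n \to \Gamma$ together with barrier arguments controlling how far $\Sigma_n$ can reach toward $\partial_\infty\mathbb{H}^3$ away from $\Gamma_n$ (the same maximum-principle barriers used in Anderson's and Tonegawa's arguments; here one uses equidistant spheres or horospheres as barriers, which is where $|H|<1$ is used to guarantee such barriers exist). That $\Sigma$ is $H$-minimizing on every compact domain $K$ follows by lower semicontinuity of the functional $I^H_K$ under the convergence $\Sigma_n \to \Sigma$ and convergence of the enclosed regions $\Omega_{\Sigma_n} \to \Omega_\Sigma$: any competitor for $\Sigma$ can be compared with $\Sigma_n$ for large $n$ by a cut-and-paste near $\partial K$, and passing to the limit yields minimality of $\Sigma$.

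The main obstacle I expect is not the area or curvature estimates, which are routine, but the control of the asymptotic boundary of the limit surface: one must rule out the possibility that $\Sigma$ is empty, or that it picks up extra boundary components at infinity, or that it collapses onto the domain $O$. This requires producing good barriers at infinity near a boundary point of $\Gamma$ and is precisely the point where the $\text{C}^1$ (or at least one-smooth-point) hypothesis enters in related works such as \cite{Co15}; for a general simple closed curve one relies on convergence of the bounded complementary domains and the fact that, since $|H|<1$, the surfaces $\Sigma_n$ stay trapped between barrier surfaces asymptotic to curves $\text{C}^0$-close to $\Gamma$, so the limit cannot escape to infinity nor degenerate.
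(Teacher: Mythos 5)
Your proposal takes essentially the same route as the paper: approximate $\Gamma$ by inscribed polygons, invoke the $\textnormal{C}^1$ existence result for each approximating curve, and extract a smooth limit on compact sets using the same local area bound (comparison of $\Sigma_n\cap K$ with $\partial K\cap\Omega_{\Sigma_n}$) and the curvature estimates for stable $H$-surfaces from \cite{RST10}. The additional care you devote to the asymptotic boundary of the limit and to lower semicontinuity of $I^H_K$ is a useful elaboration of points the paper leaves implicit, but it does not change the argument.
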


\begin{proof}
Let $\Gamma$ be a simple closed curve. Then there exists a sequence of inscribed polygons $\Gamma_n$ which converges to $\Gamma$. By the above theorem there exists a sequence of $H$-minimizing surfaces $\Sigma_n$ with asymptotic boundary $\Gamma_n$. As we did in Proposition \ref{open} we can extract a subsequence of $\Sigma_n$ which converges smoothly on compact sets to an $H$-minimizing surface $\Sigma$ with asymptotic boundary $\Gamma$. For this it is enough to find uniform local area and curvature bounds. Let $K$ be a compact domain in $\mathbb{H}^3$. Then $A(\Sigma_n\cap K)\leq A(\partial K \cap \Omega_{\Sigma_n})$ because otherwise we could replace $\Sigma_n\cap K$ with $\partial K \cap \Omega_{\Sigma_n}$ decreasing the total value of the functional $I^H_K$ as the second term disappears. So $A(\partial K)$ serves as a local area bound. Moreover, there exists global curvature bound for stable complete embedded $H$-surfaces (\cite{RST10}).
\end{proof}

\noindent Recently, the asymptotic Plateau problem for $H$-minimizing disks is also obtained.

\begin{lem}[\cite{Co15}]\label{disk}
Given $|H| <1$ and any simple closed curve $\Gamma\subset \partial\mathbb{H}^3$ with at least one smooth point there exists an embedded minimizing $H$-plane $\Sigma$ with asymptotic boundary $\Gamma$.
\end{lem}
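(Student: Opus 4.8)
The plan is to mimic the proof of the Claim immediately above, i.e.\ to approximate an arbitrary simple closed curve $\Gamma$ with one smooth point by a sequence of nicer curves for which an embedded $H$-minimizing plane is already known to exist, and then to extract a limit. The work of \cite{Co15} (and, in the minimal case, \cite{An83}) gives the existence of an embedded $H$-minimizing plane when the asymptotic boundary curve is, say, smooth; the content of the present statement is to weaken this to merely one smooth point. So the first step is: fix a point $p\in\Gamma$ near which $\Gamma$ is a smooth arc, and choose a sequence $\Gamma_n$ of simple closed curves converging to $\Gamma$ in $\text{C}^0$ such that each $\Gamma_n$ is smooth (or at least of the regularity for which Lemma \ref{disk}'s hypothesis is met with room to spare) and such that $\Gamma_n$ agrees with $\Gamma$ on a fixed neighborhood of $p$ for all large $n$; the latter is possible precisely because $\Gamma$ is already smooth there. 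For each $n$ let $\Sigma_n$ be an embedded $H$-minimizing plane with $\partial_\infty\Sigma_n=\Gamma_n$.

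The second step is the compactness argument, exactly as in Proposition \ref{open} and the Claim: one needs uniform local area bounds and uniform local (indeed global, by \cite{RST10}) curvature bounds for the $\Sigma_n$. The local area bound is obtained the same way — comparing $\Sigma_n\cap K$ with $\partial K\cap\Omega_{\Sigma_n}$ and using that $\Sigma_n$ minimizes $I^H_K$ — so $A(\partial K)$ works; the curvature bound for stable embedded $H$-surfaces is quoted from \cite{RST10}. Then (passing to a subsequence) $\Sigma_n$ converges smoothly on compact subsets of $\mathbb{H}^3$, with multiplicity one, to a complete embedded $H$-minimizing surface $\Sigma$. The multiplicity-one statement together with the smooth graphical convergence shows $\Sigma$ is, on each compact set, a normal graph over a piece of a single $\Sigma_n$, hence $\Sigma$ is a plane (genus zero, one end). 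That $\partial_\infty\Sigma=\Gamma$ follows from $\Gamma_n\to\Gamma$ in $\text{C}^0$ together with a barrier argument at infinity: the $\Sigma_n$ are trapped between geodesic planes (or horospheres) asymptotic to curves slightly inside and slightly outside $A_\epsilon(\Gamma)$, and one lets $\epsilon\to0$.

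The main obstacle I expect is precisely the control of the asymptotic boundary of the limit, $\partial_\infty\Sigma=\Gamma$, when $\Gamma$ is merely continuous: a priori the limit surface could "leak" to infinity along the non-smooth part of $\Gamma$ or pick up extra asymptotic boundary. This is exactly why the hypothesis of one smooth point is needed, and why \cite{Co15} invokes it — near the smooth point $p$ one has a genuine local barrier (a piece of a round sphere of mean curvature $H$ tangent to $\Gamma$ at $p$, or the smooth Plateau-type solution over a small disk) which forces $\Sigma$ to reach $\partial_\infty\mathbb{H}^3$ along $\Gamma$ near $p$, and then the global structure of $H$-minimizing surfaces (they are properly embedded and separate $\mathbb{H}^3$) propagates this to the rest of $\Gamma$. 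So after the compactness step, the real work is assembling this barrier at the smooth point and combining it with the $\text{C}^0$ convergence $\Gamma_n\to\Gamma$ to pin down $\partial_\infty\Sigma$; the rest is routine.
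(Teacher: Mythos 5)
The paper does not actually prove Lemma \ref{disk}: it is imported verbatim from \cite{Co15}, so there is no internal proof to compare against. Judged on its own terms, your proposal has a genuine gap at the compactness step. You assert that the local area bound "is obtained the same way" as in the Claim, by comparing $A(\Sigma_n\cap K)$ with $A(\partial K\cap\Omega_{\Sigma_n})$. That comparison is legitimate for $H$-minimizing \emph{surfaces}, which minimize $I^H_K$ among all competitors, but an $H$-minimizing \emph{plane} minimizes only among \emph{disks}, and the competitor obtained by replacing $\Sigma_n\cap K$ with $\partial K\cap\Omega_{\Sigma_n}$ is in general not a disk. So no uniform local area bound follows; the sequence of planes can accumulate with locally unbounded area, and the limit is a priori only a lamination rather than a single embedded plane with the desired boundary. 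This is precisely the gap in Anderson's original existence argument for least area planes that the introduction says \cite{Co15} was written to close, so an argument reproducing that gap cannot be the intended one.

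The actual proof in \cite{Co15} instead takes limits of solutions to compact Plateau problems, uses the structure theory of such limits as minimal (or $H$-) laminations, and exploits the one smooth point of $\Gamma$ as a barrier to extract a leaf that is a plane with asymptotic boundary exactly $\Gamma$. In your write-up both of the substantive steps --- the replacement for the missing area bound and the identification of $\partial_\infty\Sigma$ --- are the parts you either borrow incorrectly from the non-disk case or defer as "routine" at the end. Since this lemma is quoted as an external black box, the cleanest fix is simply to cite \cite{Co15} rather than to sketch a proof.
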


\noindent Because of the restriction of this theorem we will restrict ourselves to the space of simple closed curves which has at least one smooth point. So let $A^*_0$ be the space of simple closed curves with at least one smooth point and $A^H_1$ be the subset of $A^*_0$ consisting of curves for which any $H$-minimizing surface has positive genus.

\begin{prop}\label{op}
For each $|H|<1$, $A^H_1$ is an open and dense subset of $A^*_0$.
\end{prop}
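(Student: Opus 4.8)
The plan is to follow the same two-step strategy used for the case $H=0$ (Propositions \ref{open} and \ref{dense}), systematically replacing the area functional by the functional $I^H_K$ of Definition \ref{harea}.

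\emph{Openness.} As in Proposition \ref{open}, I would show that $A^*_0\setminus A^H_1$ is closed. Take $\Gamma_n\in A^*_0\setminus A^H_1$ converging in $\textnormal{C}^0$ to $\Gamma\in A^*_0$, and let $\Sigma_n$ be genus-zero $H$-minimizing surfaces with $\partial_\infty\Sigma_n=\Gamma_n$. The local area bound $A(\Sigma_n\cap K)\le A(\partial K)$ already established in the proof of the Claim above (comparison: replacing $\Sigma_n\cap K$ by $\partial K\cap\Omega_{\Sigma_n}$ removes the volume term and cannot increase $I^H_K$), together with the global curvature bound for stable $H$-surfaces from \cite{RST10}, gives via the compactness theorem (Theorem 4.37 in \cite{MRR99}) a subsequence converging smoothly on compact sets, with multiplicity one because the $\Sigma_n$ are minimizers. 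One checks that the bounded complementary regions of $\Gamma_n$ in $\mathbb{R}^2\times\{0\}$ converge to that of $\Gamma$, so $\Omega_{\Sigma_n}\to\Omega_\Sigma$ and the limit $\Sigma$ is an $H$-minimizing surface with asymptotic boundary $\Gamma$. Multiplicity-one convergence lets us write $\Sigma_n\cap K$ as a normal graph over $\Sigma\cap K$ for large $n$ and every compact $K$, so $\Sigma$ has genus zero; hence $\Gamma\in A^*_0\setminus A^H_1$.

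\emph{Density.} This mirrors Proposition \ref{dense}, using the bridge principle for $H$-minimizing surfaces of \cite{Co13-1} in place of Lemma \ref{bridgee}. Given $\Gamma\in A^*_0$ and $\epsilon>0$, I would first produce a smooth curve $\Gamma_u\in N_{\epsilon/2}(\Gamma)$ bounding a \emph{unique} $H$-minimizing surface $\Sigma$; if $\Sigma$ has positive genus we are done, so assume it is an $H$-minimizing plane. Choosing a segment $\alpha$ joining two points of $\Gamma_u$ as in the bridge principle with $\Gamma_u\cup\alpha\subset A_{\epsilon/2}(\Gamma_u)$, the bridge principle yields a uniquely $H$-minimizing disjoint union $\Gamma'_u$ of two smooth curves in $A_{\epsilon/2}(\Gamma_u)$ whose $H$-minimizing surface $\Sigma'$ is homeomorphic to $\overline\Sigma\cup\overline{P_\alpha}$, i.e.\ an annulus. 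Applying the bridge principle once more along a segment $\alpha'$ joining the two components of $\Gamma'_u$ (still inside $A_{\epsilon/2}(\Gamma_u)$) produces a connected smooth simple closed curve $\Gamma''_u$, homotopic to $\Gamma_u$ inside $A_{\epsilon/2}(\Gamma_u)$ hence lying in $N_\epsilon(\Gamma)$, and bounding a unique $H$-minimizing surface of genus one; since $\Gamma''_u$ is smooth it lies in $A^*_0$, so $\Gamma''_u\in A^H_1\cap N_\epsilon(\Gamma)$.

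\emph{Main obstacle.} The delicate point is the first step of the density argument: I need an analogue of Lemma \ref{unique} for $H$-minimizing surfaces, namely that in any foliation $\{\Gamma_t\}$ of a thin annulus the leaf $\Gamma_t$ is uniquely $H$-minimizing for all but countably many $t$, so that the bridge principle of \cite{Co13-1} (stated for uniquely minimizing curves) applies. I expect this to follow by the argument of \cite{Co11}: the $I^H$-minimizing regions $\Omega_t$ are monotone along the foliation, forcing their boundaries to be pairwise disjoint outside a countable parameter set, which with the strong maximum principle for $H$-surfaces gives uniqueness there. A secondary, harmless point is that every curve produced by the bridge principle is smooth (it lies on a smooth foliation), so the ``one smooth point'' condition defining $A^*_0$ is automatically met at each step. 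Together with Lemma \ref{disk}, the openness and density of $A^H_1$ then yield, for every $\Gamma\in A^H_1$, two distinct complete embedded $H$-surfaces — a genus-one $H$-minimizing surface and an $H$-minimizing plane — which is what the main theorem requires.
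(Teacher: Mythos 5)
Your proposal is correct and follows essentially the same route as the paper: openness by the compactness argument with the $I^H_K$-comparison area bound and the curvature bounds of \cite{RST10}, and density by combining the bridge principle for $H$-minimizing surfaces with a generic-uniqueness statement analogous to Lemma \ref{unique}. The only difference is that the paper disposes of your ``main obstacle'' by citing \cite{Co06} for the extension of Lemma \ref{unique} to $H$-minimizing surfaces rather than sketching the monotonicity argument.
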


\begin{proof}
The proof follows similarly as in the case of minimal surfaces. Let $\Gamma_n\in A_0\setminus A_1$ be a sequence converging to a curve $\Gamma\in A_0$. By definition, there exists $H$-minimizing surfaces $\Sigma_n$ which are planes and asymptotic to $\Gamma_n$. Then as in the proof of the previous claim we can extract a subsequence of $\Sigma_n$ which converges smoothly on compact sets to some $H$-minimizing plane $\Sigma$ with asymptotic boundary $\Gamma$. The limit surface $\Sigma$ is a plane which shows that $A_0\setminus A_1$ is closed, so $A^H_1$ is open. 

\noindent For the density we note that the bridge principle for absolutely area minimizing surfaces (Lemma \ref{bridgee}) applies to $H$-minimizing surfaces. Also, Lemma \ref{unique} also extends to uniquely minimizing $H$-surfaces (\cite{Co06}). Then the density of $A^H_1$ follows exactly the same way as in Proposition \ref{dense}.

\end{proof}

\begin{cor}

Given $|H|<1$, the set of simple closed curves (with at least one smooth point) which bounds more than one complete embedded $H$-surface contains an open and dense subset.

\end{cor}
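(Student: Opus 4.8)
The plan is to deduce the Corollary from Proposition \ref{op} in exactly the way Theorem \ref{main} was deduced from Propositions \ref{open} and \ref{dense}, with Lemma \ref{disk} playing the role that Lemma \ref{plane} played in the minimal case.

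First I would record the pointwise statement: every $\Gamma\in A^H_1$ bounds at least two distinct complete embedded $H$-surfaces. Since by assumption $\Gamma$ has at least one smooth point, Lemma \ref{disk} provides an $H$-minimizing plane $P$ with asymptotic boundary $\Gamma$, which has genus zero. On the other hand, the Claim preceding Lemma \ref{disk} provides an $H$-minimizing surface $\Sigma$ with asymptotic boundary $\Gamma$, and by the very definition of $A^H_1$ this $\Sigma$ has positive genus. Hence $P$ and $\Sigma$ are not homeomorphic, so they are two distinct complete embedded surfaces of constant mean curvature $H$ sharing the asymptotic boundary $\Gamma$; thus $\Gamma$ lies in the set described in the statement.

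Next I would invoke Proposition \ref{op}, which asserts that $A^H_1$ is open and dense in $A^*_0$. Combined with the previous paragraph, $A^H_1$ is a subset of the set of curves bounding more than one complete embedded $H$-surface which is open and dense in $A^*_0$; and since $A^*_0$ is itself dense in $A_0$ (any simple closed curve is a $\textnormal{C}^0$-limit of curves possessing smooth points, for instance inscribed polygons, cf. Remark \ref{simp}), the set $A^H_1$ is in particular dense in the full space $A_0$ of simple closed curves. If desired, the same bookkeeping as in Remark \ref{genus} upgrades this to the nested family $A^H_g$.

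The only point requiring genuine care is the ambient space intended by the word \emph{open}: because Lemma \ref{disk} needs the smooth-point hypothesis, the open-and-dense conclusion naturally lives in $A^*_0$ rather than in $A_0$ (and $A^*_0$ is not open in $A_0$), so one should read the Corollary, like the Theorem of the Introduction, as furnishing an open and dense subset within the space of curves with a smooth point, which is in turn dense in $A_0$. Beyond this there is no new geometric analysis to perform here: all the substantive work — the uniform local area and curvature bounds, the bridge principle at infinity, and the monotonicity of genus under $\textnormal{C}^0$-limits — is already packaged inside Proposition \ref{op}, so the main (mild) obstacle is simply keeping the inclusions $A^H_1\subset A^*_0\subset A_0$ and the genus comparison $P\ne\Sigma$ straight.
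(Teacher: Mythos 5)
Your proposal is correct and follows essentially the same route as the paper: Lemma \ref{disk} supplies a genus-zero $H$-minimizing plane for each $\Gamma\in A^H_1$, which must differ from the positive-genus $H$-minimizing surface guaranteed by the Claim, and Proposition \ref{op} gives that $A^H_1$ is open and dense. Your extra remark about openness being relative to $A^*_0$ rather than $A_0$ is a fair clarification of the intended reading but does not change the argument.
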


\begin{proof}
By Lemma \ref{disk} every simple closed curve $\Gamma\in A^H_1$ bounds an $H$-minimizing plane so the result follows.
\end{proof} \smallskip
\section{Concluding Remarks}

In this section we will comment on two natural extensions of our result. Firstly, we will ask what happens for the space of  $\text{C}^1$ simple closed curves. Then we consider the case for curves with more than one component.

\subsection{The space of  $\text{C}^1$ simple closed curves}
\noindent We have obtained the generic non-uniqueness result for the space of simple closed curves endowed with the $\text{C}^0$-topology. A natural question is what happens for  $\text{C}^1$ simple closed curves? We know that a simple closed curve $\Gamma\subset \partial_\infty\mathbb{H}^3$ which bounds a star-shaped domain bounds a unique complete embedded minimal surface which in turn has to be a plane (\cite{HL87}). Following lemma allows us to conclude that there exist $\text{C}^1$-open neighborhoods consisting of simple closed curves which bound a unique complete embedded minimal surface. \smallskip

\noindent We consider the upper half-space model for $\mathbb{H}^3$ (see the beginning of Section \ref{sectt}). We will use the following characterization of star-shaped domains: A bounded simply-connected domain $\Omega \subset \mathbb{R}^2$ with a $\text{C}^1$ boundary is star-shaped with respect to a point $x \in \Omega$ if and only if $(y-x)\cdot \nu(y) \leq 0$ for all $y\in\partial \Omega$ where $\nu$ is the inward pointing normal on $\partial \Omega$.

\begin{lem}\label{star}
Let $\Gamma\subset \mathbb{R}^2$ be a convex simple closed curve. Then there exists a $\textnormal{C}^1$-neighbourhood of $\Gamma$ consisting only of curves which bound a star-shaped domain.

\end{lem}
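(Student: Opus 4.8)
The plan is to work directly with the star-shapedness criterion stated just before the lemma: a $\mathrm{C}^1$ Jordan domain $\Omega$ is star-shaped with respect to $x$ iff $(y-x)\cdot\nu(y)\le 0$ for all $y\in\partial\Omega$, where $\nu$ is the inward normal. Since $\Gamma$ is convex, it bounds a convex domain $\Omega_0$, and convex domains are star-shaped with respect to \emph{every} interior point; in particular, choosing $x_0$ in the interior of $\Omega_0$, we have $(y-x_0)\cdot\nu_0(y)\le 0$ for all $y\in\Gamma$, where $\nu_0$ is the inward normal of $\Gamma$. The idea is that this inequality is stable under small $\mathrm{C}^1$ perturbations, \emph{provided} we can rule out the borderline equality case. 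So the first step is to upgrade the nonstrict inequality to a strict one: I claim that for a convex curve and $x_0$ in the interior, one actually has $(y-x_0)\cdot\nu_0(y)<0$ for all $y\in\Gamma$. Indeed, if $(y_0-x_0)\cdot\nu_0(y_0)=0$ for some $y_0$, then $x_0$ lies on the tangent line to $\Gamma$ at $y_0$; but a supporting line of a convex body meets the body only in boundary points, so $x_0\in\partial\Omega_0$, contradicting $x_0$ interior. Hence the inequality is strict, and by compactness of $\Gamma$ there is $\delta>0$ with $(y-x_0)\cdot\nu_0(y)\le-\delta$ for all $y\in\Gamma$.

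Next I would set up the perturbation. Parametrize $\Gamma$ by an embedding $f\colon S^1\to\mathbb{R}^2$ of class $\mathrm{C}^1$, with $|f'|$ bounded below (reparametrize by arclength if convenient). A curve $\Gamma'$ that is $\mathrm{C}^1$-close to $\Gamma$ is the image of an embedding $g\colon S^1\to\mathbb{R}^2$ with $\|f-g\|_{\mathrm{C}^1}<\eta$ for small $\eta$; it bounds a domain $\Omega'$ whose boundary is $\mathrm{C}^1$, and for $\eta$ small $\Omega'$ is simply connected (it is a Jordan domain, and stays in a neighborhood of $\Omega_0$). The inward unit normal $\nu'(g(t))$ is obtained from $g'(t)$ by rotating $90^\circ$ in the appropriate orientation and normalizing; since $g'$ is uniformly close to $f'$ and $|f'|$ is bounded away from $0$, the map $t\mapsto\nu'(g(t))$ is uniformly close to $t\mapsto\nu_0(f(t))$, and likewise $g(t)$ is uniformly close to $f(t)$. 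Therefore
\[
(g(t)-x_0)\cdot\nu'(g(t)) \;=\; (f(t)-x_0)\cdot\nu_0(f(t)) \;+\; E(t),
\]
where the error term $E(t)$ can be made uniformly smaller than $\delta/2$ in absolute value by taking $\eta$ small enough (it is a sum of inner products of uniformly small perturbations against uniformly bounded quantities). Hence $(g(t)-x_0)\cdot\nu'(g(t))\le-\delta/2<0$ for all $t$, so by the stated criterion $\Omega'$ is star-shaped with respect to $x_0$. Taking the $\mathrm{C}^1$-neighborhood of $\Gamma$ to be $\{\Gamma':\|f-g\|_{\mathrm{C}^1}<\eta\}$ for this $\eta$ finishes the proof.

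The main point requiring care — and the only genuine obstacle — is the step promoting $\le$ to $<$ and, relatedly, making sure the \emph{fixed} center $x_0$ works for all nearby curves: one should not try to track how the optimal center moves under perturbation, but rather observe that a single interior point $x_0$ of the convex domain $\Omega_0$, sitting at positive distance from $\partial\Omega_0$, remains a valid star-center for every sufficiently $\mathrm{C}^1$-close curve because of the uniform gap $\delta$. Everything else is a routine continuity/compactness argument; the only mild technical bookkeeping is checking that a $\mathrm{C}^1$-small perturbation of an embedding is still an embedding bounding a $\mathrm{C}^1$ simply-connected domain, which follows from the Jordan curve theorem together with the fact that $\Gamma'$ stays in a tubular neighborhood of $\Gamma$ on which one has good control.
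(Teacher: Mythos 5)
Your proof is correct and rests on the same two ingredients as the paper's: the normal-vector characterization of star-shapedness, and the observation that convexity forbids the equality case because the tangent (supporting) line at a boundary point cannot pass through an interior point. The paper organizes this as a proof by contradiction with a sequence of non-star-shaped curves converging in $\mathrm{C}^1$, whereas you argue directly via a uniform gap $\delta$ and an explicit perturbation estimate; this is a presentational difference only.
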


\begin{proof}

Let $\Gamma$ be a convex curve such that the origin is contained in the bounded region enclosed by $\Gamma$. We claim that in a sufficiently small $\text{C}^1$-neighborhood of $\Gamma$ every curve bounds a star-shaped region with respect to the origin. Otherwise, there exists a sequence of curves $\Gamma_n$ which are not star-shaped with respect to the origin and the sequence converges to $\Gamma$ in $\textnormal{C}^1$. Then, by the characterization of star-shaped domains given above, there exists a sequence of points $x_n \in \Gamma_n$ with $x_n\cdot \nu_n(x_n) \geq 0$ where $\nu_n$ is the inward pointing normal vector field to $\Gamma_n$. By passing to a subsequence we can assume that $x_n$ converges to a point $x\in\Gamma$ and the tangent lines at $x_n$ to $\Gamma_n$ converge to the tangent line of $\Gamma$ passing through $x$. In particular $\nu_n(x_n)$ converges to $\nu(x)$. Hence, we find a point $x\in\Gamma$ such that $x\cdot \nu(x) \geq 0$. On the other hand since $\Gamma$ is also star-shaped with respect to the origin, we have $x\cdot \nu(x) \leq 0$ so that $x\cdot \nu(x) = 0$. Therefore the tangent line $L$ of $\Gamma$ passing through $x$ coincides with the line passing through the origin and $x$. Hence, $L$ intersects the bounded open set enclosed by $\Gamma$ which contradicts with the convexity of $\Gamma$.
\end{proof} \smallskip

By the above lemma and the uniqueness results for curves which bound star-shaped domains (\cite{HL87}) we have the following corollary.

\begin{cor}
There exist $\textnormal{C}^1$-open subsets of the space of simple closed curves in which every curve bounds a unique complete embedded minimal surface. 
\end{cor}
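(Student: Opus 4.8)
The plan is to deduce the corollary directly from Lemma \ref{star} together with the uniqueness result for curves bounding star-shaped domains cited from \cite{HL87}. The key point is that Lemma \ref{star} produces, around any convex simple closed curve $\Gamma$, a genuinely $\textnormal{C}^1$-open set every member of which bounds a star-shaped domain; and the cited theorem says that each such boundary curve is the asymptotic boundary of a \emph{unique} complete embedded minimal surface (necessarily a plane). Composing these two facts gives precisely a nonempty $\textnormal{C}^1$-open set of the desired type.

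Concretely, first I would fix any convex simple closed curve $\Gamma\subset\mathbb{R}^2\subset\partial_\infty\mathbb{H}^3$ — for instance a round circle — so that the hypothesis of Lemma \ref{star} is satisfied. Applying that lemma yields a $\textnormal{C}^1$-neighborhood $U$ of $\Gamma$ in the space of simple closed curves such that every $\Gamma'\in U$ bounds a star-shaped planar domain. Second, I would invoke the uniqueness theorem of \cite{HL87}: a simple closed curve bounding a star-shaped domain in $\partial_\infty\mathbb{H}^3$ is the asymptotic boundary of a unique complete embedded minimal surface in $\mathbb{H}^3$. Hence every curve in $U$ has this uniqueness property, and $U$ is the required $\textnormal{C}^1$-open subset. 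One should remark that $U$ is nonempty and indeed open in the $\textnormal{C}^1$-topology precisely because Lemma \ref{star} was stated for the $\textnormal{C}^1$-topology; this is the only subtlety, and it is already handled upstream.

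There is essentially no main obstacle remaining at this stage: the real work has been done in Lemma \ref{star}, whose proof uses the compactness argument on the normal-contact condition $x_n\cdot\nu_n(x_n)\ge 0$ and the convexity of $\Gamma$ to force a contradiction. The only thing to be careful about in writing the corollary is to point out that the minimal surface obtained is unique among \emph{all} complete embedded minimal surfaces asymptotic to the curve — not merely unique among planes or among absolutely area minimizing surfaces — since that is exactly the content of the Hardt–Lin uniqueness theorem for star-shaped boundary data, and it is what makes the statement a genuine counterpoint to the genericity theorems of Section \ref{gener}. I would close by noting explicitly that this shows the $\textnormal{C}^0$-genericity of non-uniqueness proved above fails in the $\textnormal{C}^1$-topology, which is the conceptual payoff of the corollary.
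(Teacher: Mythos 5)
Your proposal is correct and matches the paper's argument exactly: the paper also deduces the corollary by combining Lemma \ref{star} (applied around a convex curve) with the Hardt--Lin uniqueness theorem for asymptotic boundaries of star-shaped domains. No gaps.
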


\noindent Hence, generic non-uniqueness does not hold for the space of $\text{C}^1$ simple closed curves.

\subsection{Curves with more than one component}

\noindent Given a closed curve $\Gamma\subset  \partial \mathbb{H}^3$ with more than one component, in general there is no connected minimal surface with asymptotic boundary $\Gamma$. For instance, if $\Gamma$ is a disjoint union of two round circles and if $\Sigma$ is a connected minimal surface, then $\Sigma$ has to be a surface of revolution (\cite{LR85}). However, there exist pairs of round circles which cannot bound a surface of revolution (\cite{Wa15}). On the other hand solving the asymptotic Plateau problem in mean convex domains of $\mathbb{H}^3$ (\cite{OS98}) we can obtain plenty of connected minimal surfaces with two boundary components. Then it might be possible to apply the degree theory developed for minimal surfaces in $\mathbb{H}^3$ (\cite{AM10}) in order to obtain non-uniqueness results in this case.

Of course, if we don't restrict to connected minimal surfaces, existence problem is still solvable by absolutely area minimizing surfaces. Then we can apply the bridge principle as we did before to show that $A_1$ is an open and dense subset of $A_0$. Hence, generic non-uniqueness follows as any curve with more than one component bounds a disjoint union of least area planes. More precisely the following generalization of our main theorem holds.

\begin{thm}
Let $A_{0,k}$ be the space of curves with $k$ components. Then the subset of $A_{0,k}$ consisting of curves which bound more than one complete embedded minimal surface (not necessarily connected) contains an open and dense subset of $A_{0,k}$.
\end{thm}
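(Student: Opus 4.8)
The plan is to mimic the $k=1$ case (Theorem \ref{main}) essentially verbatim, since all the ingredients generalize once one keeps careful track of how many components the boundary curve has. First I would fix $k \geq 1$ and let $A_{1,k} \subset A_{0,k}$ denote the set of $k$-component curves $\Gamma$ for which every absolutely area minimizing surface (allowing disconnected minimizers) asymptotic to $\Gamma$ has positive genus on at least one component, or more precisely has total genus at least one. The goal is to show $A_{1,k}$ is open and dense in $A_{0,k}$; combined with Lemma \ref{plane} applied componentwise — any $k$-component curve bounds a disjoint union of $k$ least area planes, which is a genus-zero complete embedded minimal surface — this immediately yields that every $\Gamma \in A_{1,k}$ bounds at least two distinct complete embedded minimal surfaces (the genus-zero union of planes, and a positive-genus absolutely area minimizing one), giving the theorem.

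For openness I would run the compactness argument of Proposition \ref{open} unchanged: given $\Gamma_n \to \Gamma$ in $A_{0,k}$ with each $\Gamma_n$ bounding a genus-zero absolutely area minimizing $\Sigma_n$, the local area bounds (comparing with $\partial K \cap \Omega_n$) and the global curvature bound of \cite{RST10} for stable minimal surfaces let us extract a subsequence converging smoothly with multiplicity one on compact sets to an absolutely area minimizing $\Sigma$ with $\partial_\infty \Sigma = \Gamma$; the multiplicity-one convergence forces $\Sigma$ to be locally graphical over $\Sigma_n$ on every compact set, hence genus zero. The only new point is bookkeeping the components: smooth convergence on compacts plus the fact that $\Gamma_n \to \Gamma$ with a fixed number $k$ of components means $\Sigma$ also has the correct asymptotic boundary, so $\Gamma \in A_{0,k} \setminus A_{1,k}$. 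Thus the complement is closed.

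For density I would start from $\Gamma \in A_{0,k}$ and $\epsilon > 0$, use Lemma \ref{unique} (applied to a foliation of $A_\epsilon(\Gamma)$, componentwise, or to each component in turn) to find a smooth $\Gamma_u \in N_{\epsilon/2}(\Gamma)$ with $k$ components bounding a unique absolutely area minimizing surface $\Sigma$. If $\Sigma$ already has positive genus we are done; otherwise $\Sigma$ is a disjoint union of $k$ planes and, since $\Gamma_u$ is uniquely minimizing, Lemma \ref{bridgee} applies. I would then attach two bridges inside $A_{\epsilon/2}(\Gamma_u)$ exactly as in Proposition \ref{dense}: a first bridge $\alpha$ joining two points of one component of $\Gamma_u$, producing a $(k+1)$-component uniquely minimizing curve whose minimizer has an annular component, then a second bridge $\alpha'$ joining the two components of that annulus, producing a $k$-component curve $\Gamma_u''$ whose minimizer has a genus-one component. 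Homotopy in $A_{\epsilon/2}(\Gamma_u)$ keeps the component count at $k$ and places $\Gamma_u'' \in N_\epsilon(\Gamma) \cap A_{1,k}$. The main obstacle, such as it is, is purely a matter of care rather than mathematics: one must check that the two successive bridge attachments can be carried out while both keeping the resulting curve within the prescribed $C^0$-neighborhood and returning to exactly $k$ components (the first bridge raising the count to $k+1$, the second lowering it back to $k$), and that the bridge principle of \cite{MW13} is indeed stated for finite disjoint unions of smooth simple closed curves — which it is, per the hypotheses of Lemma \ref{bridgee} — so no genuinely new ingredient is needed.
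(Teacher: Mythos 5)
Your proposal is correct and follows essentially the same route as the paper, which only sketches this theorem in the concluding remarks: define the positive-genus subset $A_{1,k}$, reuse the compactness argument for openness and the two-bridge construction for density, and produce the second surface as a disjoint union of least area planes via Lemma \ref{plane}. Your write-up is in fact more detailed than the paper's own treatment, but no new ideas are involved.
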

\bibliographystyle{siam}
\bibliography{CagriHaciyusufoglu}

\end{document}